\numberwithin{equation}{section}
\newcommand{\Hmm}[1]{\leavevmode{\marginpar{\tiny%
$\hbox to 0mm{\hspace*{-0.5mm}$\leftarrow$\hss}%
\vcenter{\vrule depth 0.1mm height 0.1mm width \the\marginparwidth}%
\hbox to 0mm{\hss$\rightarrow$\hspace*{-0.5mm}}$\\\relax\raggedright #1}}}
\newtheorem{theo}{Theorem}[section]
\newtheorem{lemme}[theo]{Lemma}
\def\N{{\mathbb{N}\ \!\!}}
\def\R{{\mathbb{R}}}
\def\D{{\mathcal{D}\ \!\!}}
\def\C{{\mathcal{C}\ \!\!}}
\def\FF{{\mathcal{F}\ \!\!}}
\def\Var{{\mathrm{{\rm Var}}}}
\def\and{{\mathrm{{\rm and}}}}
\begin{document}

\title[Eigenvalues estimates for one-dimensional diffusions]{A note on eigenvalues estimates for one-dimensional diffusion operators}

\author{Michel~Bonnefont} \address[M.~Bonnefont]{UMR CNRS 5251, Institut de Math\'ematiques de Bordeaux, Universit\'e Bordeaux 1, France}
\thanks{MB is partially supported by the French ANR-18-CE40-0012 RAGE project}
\email{\url{mailto:michel.bonnefont(at)math.u-bordeaux.fr}} \urladdr{\url{http://www.math.u-bordeaux.fr/~mibonnef/}}

\author{Ald\'eric~Joulin} \address[A.~Joulin]{UMR CNRS 5219, Institut de Math\'ematiques de Toulouse, Universit\'e de Toulouse, France}
\thanks{AJ is partially supported by the French ANR-18-CE40-006 MESA project}
\email{\url{mailto:ajoulin(at)insa-toulouse.fr}} \urladdr{\url{http://perso.math.univ-toulouse.fr/joulin/}}

\keywords{Diffusion operator; Schr\"odinger operator; Intertwining; Eigenvalues; Spectral gap; Max-min principle}

\subjclass[2010]{60J60, 39B62, 37A30, 47A75.}

\maketitle

\begin{abstract} Dealing with one-dimensional diffusion operators, we obtain upper and lower variational formulae on the eigenvalues given by the max-min principle, generalizing the celebrated result of Chen and Wang on the spectral gap. Our inequalities reveal to be sharp at least when the eigenvalues considered belong to the discrete spectrum of the operator, since in this case both lower and upper bounds coincide and involve the associated eigenfunctions. Based on the intertwinings between diffusion operators and some convenient gradients with weights, our approach also allows to estimate the gap between the two first positive eigenvalues when the spectral gap belongs to the discrete spectrum.
\end{abstract}

\section{Introduction}
\label{sect:intro}
Given a probability measure $\mu$ on $\R^n$ whose density with respect to the Lebesgue measure is (proportional to) $e^{-V}$, where $V$ is some smooth potential, one can associate a canonical self-adjoint diffusion operator,
\[
Lf = \Delta f- \nabla V \, \nabla f ,
\]
where $\Delta$ is the Laplacian and $\nabla$ stands for the Euclidean gradient on $\R^n$. Under some reasonable assumptions on $V$, it is well-known that the underlying Markov process converges in distribution to the invariant and reversible probability measure $\mu$. Moreover the speed of convergence in $L^2 (\mu)$ is given by the so-called spectral gap $\lambda_1(-L)$ of the operator, that is, its first positive eigenvalue. We refer to Section \ref{sect:prelim} for precise definitions together with more details about this large body of work. \smallskip

Except in some very particular situations, the exact value of the spectral gap is unknown in general and it is thus of particular interest to obtain some estimates on this quantity. In the one-dimensional case, things are a little simpler and it reveals to be of crucial importance since the study of the convergence to equilibrium of many high-dimensional Markovian models might be reduced to a careful analysis of an associated one-dimensional diffusion. Two decades ago, a lower variational formula for $\lambda_1(-L)$ was proposed by Chen and Wang \cite{chen_wang} through a coupling approach. It can be written as follows:
\[
\lambda_1(-L) \geq \sup_ b \inf _{x\in \R} V''(x) - \frac{L b(x)}{b(x)},
\]
where the supremum runs over all smooth positive function $b$. Moreover, if the spectral gap is attained, i.e., it belongs to the discrete spectrum of the operator $-L$, they proved that the equality holds by choosing $b$ as the derivative of an associated eigenfunction. Recently, such a formula has been revisited in \cite{bj,bjm2} by a totally different method, namely the use of the so-called intertwinings between gradients and operators. We mention that Chen-Wang's formula is not a special feature of the one-dimensional case since it holds in higher dimension, as it has been established in \cite{ABJ}. However, the analysis is rather delicate and involves some tedious technicalities since the objects of interest act on differential forms rather than on functions, cf. also \cite{bj2}, an instance which does not really occur in the one-dimensional case. \smallskip

Dealing with one-dimensional diffusion operators, the purpose of this note is to extend Chen-Wang's variational formula to higher eigenvalues given by the max-min principle. We provide lower and upper variational formulae on these eigenvalues and study the case of optimality. Let us briefly describe the main ideas of the proof of the two-sided estimates which is divided into several steps. Roughly speaking, the intertwining method consists in differentiating with some weight a diffusion operator and trying to write the result as a Schr\"odinger type operator acting on this derivative with weight. At this step, a key argument is to adapt to the weighted case a result established by Johnsen \cite{johnsen} who noticed that the intertwining is a unitary transformation between these two operators, so that their spectra coincide. According to this observation, the point then is to bound the associated multiplicative, or 0-order, potential appearing in this Schr\"odinger operator, yielding to a new diffusion operator. Then the preceding procedure has to be iterated for this new diffusion operator. At each step of the analysis, the weight appearing in the gradient used for the intertwining has to be chosen conveniently, since all the objects of interest strongly depend on it (together with the previous ones) in their very definition, in particular when studying the case of optimality in the estimates. \smallskip

The content of the paper is the following. In Section \ref{sect:prelim} we introduce the notation and the main ideas of the intertwining approach. Moreover we recall some basic facts on spectral analysis for self-adjoint operators, including the diffusion and Schr\"odinger operators emphasized by the intertwining. Section \ref{sect:main} is devoted to our main result, contained in Theorem \ref{theo:lambda-n}, in which some lower and upper variational formulae for eigenvalues given by the max-min principle are provided, together with a study of the possible case of optimality involving the underlying eigenfunctions. As mentioned above, the proof is quite technical and requires some preparation and intermediate results. In Section \ref{sect:ex-ap}, we illustrate our main result by some classical and less classical examples. On the one hand we consider the standard case of a uniformly convex potential, for which we recover the one-dimensional version of a result due to Milman \cite{milman}, comparing the eigenvalues of a diffusion operator with the ones of the Ornstein-Ulhenbeck operator associated to the standard Gaussian distribution. On the other hand, the intertwining approach is sufficiently robust to cover more interesting examples involving only convex potentials, e.g. the so-called Subbotin distribution, and even non-convex ones such as a double-well or an oscillating Gaussian potentials. Up to some universal constants, the examples under review exhibit sharp estimates with respect to the parameters of interest, in accordance with the celebrated Weyl law describing the asymptotic behavior of these eigenvalues. Finally, we propose in Section \ref{sect:ecart} a first attempt to study the gap between consecutive eigenvalues of a diffusion operator. The proof, which also uses the intertwining method, is based on a celebrated result of Brascamp and Lieb \cite{brascamp_lieb} about log-concavity properties of the ground state of a standard Schr\"odinger operator consisting of a Laplacian plus some potential. For the moment we are only able to estimate the gap between the two first positive eigenvalues, but the intertwining approach seems to be relevant to address this problem in full generality.

\section{Preliminaries and basic material}
\label{sect:prelim}

\subsection{Intertwinings}
Let $\C^\infty (\R)$ be the space of infinitely differentiable real-valued functions on the real line and let $\C_0 ^\infty (\R)$ be the subspace of $\C^\infty (\R)$ consisting of smooth compactly supported functions. The original diffusion operator we consider in this paper is defined on $\C^\infty (\R)$ by
\begin{eqnarray*}
L f & = & f'' - V' \, f' ,
\end{eqnarray*}
where $V \in \C^\infty (\R)$ is some smooth potential. The underlying invariant (and reversible) measure $\mu$ has Lebesgue density proportional to $e^{-V}$ and is assumed, throughout the paper, to be finite and renormalized into a probability measure. \smallskip

More generally if $\FF$ stands for the set of functions in $\C^\infty (\R)$ which do not vanish, then we introduce for a given function $a\in \FF$ the following Diffusion operator
\begin{equation}
\label{eq:La}
L_{a} f = f'' - V_{a} ' \, f' ,
\end{equation}
where
\begin{eqnarray*}
V_{a} & = & V + \log (a^2) .
\end{eqnarray*}
The potential $V_a$ is then in $\C^\infty (\R)$ and the invariant (and reversible) measure $\mu_{a}$, which is non-necessarily finite, has its Lebesgue-density proportional to $e^{-V_{a}}$. In particular choosing $a = 1$ we have $V_1 = V$ and thus $L_1 = L$ and $\mu_1 = \mu$. In other words when $a$ is chosen to be constant then the dynamics remains the same (up to some constant factors). \smallskip

Let us recall a result which is the cornerstone of the forthcoming analysis. It deals with the notion of intertwining between Diffusion operators and weighted gradients initiated in \cite{bj} in the one-dimensional case. Denote $\partial_a$ the usual gradient multiplied by a function $a\in \FF$, seen as a weight in the sequel, i.e. $\partial_a f = a f'$ (we will sometimes use the notation $\partial$ for the usual derivative). In the present language the result stands as follows (the proof is straightforward after some basic algebra).
\begin{lemme}\label{lemme:entrelacement}
Letting $a \in \FF$, if $b\in \FF$ is a given weight then we have the following intertwining relation between operators and weighted gradient: for every $f\in \C ^\infty (\R)$,
\label{lemme:entrelacement}
\begin{equation}
\label{eq:intert}
\partial_{b} L_{a} f = (L_{a b} - M_{a} ^{b}) \, \partial _{b} f ,
\end{equation}
where the operator $L_{ab}$ is obtained as in \eqref{eq:La}, the function $a$ being replaced by the product $ab$, and the smooth potential $M_{a} ^b$ is given by
\begin{eqnarray}
\label{eq:Ma_derivee}
\nonumber M_{a} ^b & =& V_{a} '' - b \, L_{a} (1/b) \\
\nonumber & = & V_{a} '' + \frac{L_{a b} (b)}{b} \\
& = & \frac{(-L_{a} h)'}{h'},
\end{eqnarray}
the last equality being obtained by rewriting $b$ as $b = 1/h'$ with $h' \in \FF$.
\end{lemme}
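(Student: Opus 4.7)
The proof is by direct computation: both sides of \eqref{eq:intert} are linear differential expressions in $f$ of order at most three, so it suffices to expand them and match the coefficients of $f'''$, $f''$, and $f'$.

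First I would compute the left-hand side. Writing $L_a f = f'' - V_a' f'$ and differentiating yields
$$\partial_b L_a f = b(f'' - V_a' f')' = b f''' - b V_a' f'' - b V_a'' f'.$$
Next I would compute $L_{ab}(\partial_b f)$. Since $V_{ab} = V_a + \log(b^2)$, one has $V_{ab}' = V_a' + 2b'/b$. Expanding $(bf')'' = bf''' + 2b' f'' + b'' f'$ and substituting into the definition \eqref{eq:La} of $L_{ab}$ gives
$$L_{ab}(\partial_b f) = bf''' + 2b' f'' + b'' f' - \left(V_a' + \tfrac{2b'}{b}\right)\bigl(bf'' + b'f'\bigr).$$
The $f'''$ and $f''$ terms then match those of $\partial_b L_a f$ (the extra $2b'f''$ is absorbed by $-(2b'/b)(bf'')$), so the discrepancy is a pure multiple of $f'$. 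After simplification,
$$L_{ab}(\partial_b f) - \partial_b L_a f = \left[V_a'' + \frac{b''}{b} - V_a' \frac{b'}{b} - 2\left(\frac{b'}{b}\right)^2\right] b f' = M_a^b \, \partial_b f,$$
which is the announced intertwining, with $M_a^b$ equal to the bracketed coefficient.

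Finally I would check that this coefficient coincides with each of the three formulae in \eqref{eq:Ma_derivee}. For $M_a^b = V_a'' - b L_a(1/b)$, compute $(1/b)' = -b'/b^2$ and $(1/b)'' = -b''/b^2 + 2(b')^2/b^3$ and substitute into $L_a(1/b) = (1/b)'' - V_a'(1/b)'$. For $M_a^b = V_a'' + L_{ab}(b)/b$, expand $L_{ab}(b) = b'' - V_{ab}' b'$ using $V_{ab}' = V_a' + 2b'/b$ and divide by $b$. For the third form, set $b = 1/h'$, so that $h'' = -b'/b^2$ and $h''' = -b''/b^2 + 2(b')^2/b^3$, form $(-L_a h)' = -h''' + V_a'' h' + V_a' h''$, and divide by $h' = 1/b$. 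In each case the same closed expression for $M_a^b$ is recovered.

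There is no genuine obstacle here; the only subtlety is the bookkeeping of the term $2b'/b$ that arises from the logarithmic derivative $(\log b^2)' = 2b'/b$ when going from $V_a'$ to $V_{ab}'$. This extra term, together with the two derivatives of the factor $b$ in $\partial_b f = bf'$, is exactly what produces the 0-order potential $M_a^b$ and is the reason why $\partial_b$ intertwines $L_a$ with the Schr\"odinger-type operator $L_{ab} - M_a^b$ rather than with $L_{ab}$ itself.
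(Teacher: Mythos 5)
Your computation is correct and is exactly the ``basic algebra'' the paper alludes to when it states the lemma without detailed proof: expanding $\partial_b L_a f$ and $L_{ab}(\partial_b f)$, noting $V_{ab}'=V_a'+2b'/b$, and identifying the residual zero-order term with $M_a^b$ in its three equivalent forms. Nothing is missing; this matches the paper's intended argument.
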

Above the operator $L_{ab} - M_{a} ^b$ is a Schr\"odinger type operator because of the presence of the multiplicative, or 0-order, potential $M_{a} ^b$. Actually, the intertwining \eqref{eq:intert} might be decomposed as the composition of two transformations (given naturally in this order but they actually commute):

$\circ$ first we consider the classical intertwining for the operator $L_a$, i.e., applying \eqref{eq:intert} without weight (that is, $b=1$) entails
$$
(L_a f)' = (L_a - M_a ^1) f'.
$$
Observing that the potential $M_a ^1$ rewrites as $V_a ''$, the latter identity is at the heart of the famous Bakry-\'Emery theory about the so-called $\Gamma_2$ calculus, cf. for instance \cite{BGL} for a nice introduction to the topic, with precise references. Let us mention that we cannot iterate in general the classical intertwining involving second derivatives, unless the potential $M_a ^1$ is constant (however we will see later in some convenient cases that we are able to iterate the more general intertwining \eqref{eq:intert} by choosing the weight $b$ so that the potential $M_a ^b$ is constant).

$\circ$ then the Schr\"odinger operator $L_{ab} - M_{a} ^b$ is the so-called (Doob) $h$-transform with $h=1/b$ of the operator $L_a - M_a ^1$ obtained at the previous step, that is,
$$
b \, (L_a - M_a ^1) \left( f/b \right) = (L_{ab} - M_{a} ^b ) f ,
$$
with the potential $M_a ^b$ rewritten as $M_a ^b = M_a ^1 - b L_a (1/b)$. Recall that the Doob transform exhibits an underlying group structure: if $h, k \in \FF$ are two given functions, then the $hk$-transform is nothing but the $h$-transform of the $k$-transform (it is also the $k$-transform of the $h$-transform). In particular the $h$-transform and the original operator are the same if and only if $h$ is constant. \smallskip

\noindent According to these observations, we understand why the operator $L_{ab}$ does not see the different roles of $a$ and $b$ and why the potentials $M_{a} ^b$, $M_1 ^{ab}$, $M_{ab} ^1$ and $M_b ^{a}$ all differ.

\subsection{Spectral analysis}
Let us recall some basic material on the spectrum of self-adjoint operators. By an essentially self-adjoint operator $T$, we mean that the operator $T$ initially defined on $\C_0 ^\infty (\R)$ admits a unique self-adjoint extension (still denoted $T$) with domain $\D(T) \subset L^2 (\nu)$, where $\nu$ is some non-negative measure on $\R$, possibly infinite, in which $\C ^\infty _0 (\R)$ is dense for the norm induced by $T$, i.e.,
$$
\Vert f \Vert _{D(T)} = \sqrt{\Vert f \Vert _{L^2 (\nu)} + \Vert Tf \Vert _{L^2 (\nu)}}.
$$
Above and in the sequel we use a general notation $T$ in order to unify the two cases we are interested in:
\begin{itemize}
\item[$(i)$] diffusion operators: $T = -L_{a}$ is associated to the measure $\nu = \mu_{a}$, assumed to be finite;
\item[$(ii)$] Schr\"odinger operators: $T = -L_{ab} + M_{a} ^b$ with potential $M_{a} ^b \in \C^\infty (\R)$, is endowed with the measure $\nu = \mu_{ab}$.
\end{itemize}
The operators of the type $(i)$ and $(ii)$ are symmetric and non-negative on $\C ^\infty _0 (\R)$. Indeed this is a classical observation for diffusion operators, together with the symmetry of the Schr\"odinger operators of $(ii)$. However the non-negativity of the latter is a bit more subtle since in the one-dimensional case it is a consequence of the intertwining of Lemma \ref{lemme:entrelacement}: for all $f, g\in \C ^\infty _0 (\R)$,
\begin{eqnarray*}
\int_\R f \, (-L_{ab} + M_a ^b) g \, d\mu_{ab} & = & \int_\R \partial_b h_1 \, (-L_{ab} + M_a ^b) \, \partial_b h_2 \, d\mu_{ab} \\
& = & \int_\R \partial_b h_1 \, \partial _b (-L_{a} h_2) \, d\mu_{ab} \\
& = & \int_\R h_1 ' \, (-L_{a} h_2)' \, d\mu_{a} \\
& = & \int_\R L_a h_1 \, L_{a} h_2 \, d\mu_{a} ,
\end{eqnarray*}
where $h_1 , h_2 \in \C ^\infty (\R)$ are some primitive functions of $f/b$ and $g/b$, respectively. Hence they are essentially-self-adjoint, cf. \cite{helffer_book}. \smallskip

The spectrum of the self-adjoint operator $T$, denoted $\sigma (T)$, corresponds to the complement of the set of $\lambda \in \R$ such that the operator $T-\lambda I$ is invertible from $\D (T)$ to $L^2(\mu)$ and has a continuous inverse. As $T$ is non-negative, $\sigma (T)$ is a subset of $[0,+\infty)$ and is divided into two disjoint parts: the discrete spectrum $\sigma_{disc} (T)$, that is, the set of isolated eigenvalues with finite multiplicity ($=1$ in our one-dimensional setting; by an eigenfunction associated to an eigenvalue $\lambda$ we mean some non identically null $g\in \D (T)$ such that $Tg = \lambda g$) and the essential spectrum $\sigma _{ess}(T)$, i.e., the complement of the discrete spectrum consisting of limit points in $\sigma (T)$ (the case of eigenvalues with infinite multiplicity cannot occur in the one-dimensional case, the dimension of the associated eigenspace being at most 2 according to the general theory of second order
linear differential equations). To identify whether or not a given real number belongs to the spectrum, the renown Weyl criterion states that $\lambda \in \sigma (T)$ if and only if there exists a sequence $(g_n)_{n\in \N} \subset \D (T)$ with $\Vert g_n \Vert _{L^2 (\nu)} = 1$ such that $$
\lim_{n\to +\infty } \Vert T g_n - \lambda g_n \Vert _{L^2 (\nu)} = 0.
$$
Moreover, $\lambda \in \sigma_{ess} (T)$ if and only if, in addition to the above properties, the sequence $(g_n)_{n\in \N}$ has no convergent subsequence. \smallskip

The bottom of the spectrum is given by the following variational formula
$$
\lambda_0 (T) = \inf _{f\in \D(T)} \frac{\int_{\R} f\, Tf \, d\nu}{\int_{\R } f^2 \, d\nu} .
$$
For instance in the diffusion case $(i)$ we have $\lambda_0 (-L_a) = 0$ and the constants are the associated eigenfunctions whereas in the Schr\"odinger setting $(ii)$ we only have $\lambda_0 (-L_{ab} + M_a ^b) \geq 0$. Coming back to the general case, the higher eigenvalues below the bottom of the essential spectrum are given by the well-known max-min principle, cf. \cite{helffer_book}. Denote $\perp _\nu$ the orthogonality induced by the scalar product in $L^2(\nu)$.
\begin{theo}[Max-min principle]
\label{theo:CF}
Consider the following variational formulae: for every $n \in \N^*$,
$$
\lambda_{n} (T) = \sup_{g_0 , g_1 , \ldots, g_{n-1} \in L^2 (\nu)} \, \underset{{\underset{f \perp_\nu  g_i, \, i = 0, \ldots, n-1}{f\in \D(T)}}}{\inf} \frac{\int_{\R} f\, Tf \, d\nu}{\int_{\R } f^2 \, d\nu} .
$$
Then either,

$(a)$ $\lambda_{n} (T)$ is the $(n+1)^{th}$ eigenvalue when ordering the eigenvalues in increasing order and $T$ has a discrete spectrum in $[0,\lambda_{n} (T)]$, i.e., $\sigma_{ess} (T) \cap [0,\lambda_{n} (T)] = \emptyset$;

\noindent or,

$(b)$ it is itself the bottom of the essential spectrum $\sigma_{ess}(T)$, all the $\lambda_m (T)$ coinciding with $\lambda_n (T)$ when $m \geq n$, and there are at most $n$ eigenvalues in $\sigma_{disc}(T) \cap [0,\lambda_{n} (T)]$.
\end{theo}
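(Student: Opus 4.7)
The plan is to invoke the spectral theorem for the essentially self-adjoint non-negative operator $T$, writing $T = \int_{0}^{+\infty} \l \, dE_\l$ in terms of its spectral resolution $(E_\l)_{\l \geq 0}$. I would set $\mu := \inf \sigma_{ess}(T) \in [0,+\infty]$, enumerate the eigenvalues of $\sigma_{disc}(T) \cap [0,\mu)$ in increasing order as $\mu_0 < \mu_1 < \cdots < \mu$ (each simple, by the one-dimensional setting), and denote by $e_k$ the corresponding $L^2(\nu)$-normalized eigenfunctions. Writing $N \in \N \cup \{+\infty\}$ for the number of such eigenvalues, the theorem reduces to the identification $\lambda_n(T) = \mu_n$ when $N \geq n+1$ (case $(a)$) and $\lambda_n(T) = \mu$ when $N \leq n$ (case $(b)$).

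For the lower bound on $\lambda_n(T)$, I would exhibit explicit test vectors $(g_i)_{i=0}^{n-1}$. In case $(a)$ take $g_i := e_i$: orthogonality to these forces $f \in \ran E_{[\mu_n,+\infty)}$, so functional calculus directly yields $\int_\R f\,Tf \, d\nu \geq \mu_n \int_\R f^2 \, d\nu$. In case $(b)$ take $g_i := e_i$ for $i < N$ and $g_i := 0$ for $N \leq i \leq n-1$; since $[0,\mu) \cap \sigma(T)$ consists exactly of the eigenvalues $\mu_0, \ldots, \mu_{N-1}$, orthogonality to $e_0,\ldots,e_{N-1}$ forces $f \in \ran E_{[\mu,+\infty)}$, and hence the Rayleigh quotient of $f$ is at least $\mu$.

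For the matching upper bound, fix arbitrary $g_0, \ldots, g_{n-1} \in L^2(\nu)$. In case $(a)$ I would observe that the $(n+1)$-dimensional subspace $\Span(e_0,\ldots,e_n)$ necessarily meets $\{g_0,\ldots,g_{n-1}\}^\perp$ in a nonzero vector $f$; since $T$ acts on this subspace with spectrum bounded above by $\mu_n$, the Rayleigh quotient of $f$ is at most $\mu_n$. In case $(b)$, because $\mu \in \sigma_{ess}(T)$ and no eigenvalue has infinite multiplicity in the one-dimensional setting, $\mu$ must be an accumulation point of $\sigma(T)$, so $\ran E_{[\mu,\mu+\eps]}$ is infinite-dimensional for every $\eps > 0$. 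A dimension count then produces a nonzero $f$ in this range orthogonal to the $g_i$'s with Rayleigh quotient at most $\mu + \eps$, and letting $\eps \downarrow 0$ closes the estimate.

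Combining these bounds yields the announced values of $\lambda_n(T)$. The tail assertions in case $(b)$---that $\lambda_m(T) = \lambda_n(T)$ for all $m \geq n$ and that $\sigma_{disc}(T) \cap [0,\lambda_n(T)]$ contains at most $n$ eigenvalues---fall out of the same reasoning applied to each $m \geq N$ and from the disjointness $\sigma_{disc}(T) \cap \sigma_{ess}(T) = \emptyset$, which excludes $\mu = \lambda_n(T)$ from the eigenvalue count. The main obstacle will be the infinite-dimensionality of $\ran E_{[\mu,\mu+\eps]}$ in case $(b)$, which relies on the Sturm--Liouville fact that every eigenvalue of $T$ is simple, so that $\mu \in \sigma_{ess}(T)$ must be an accumulation point of $\sigma(T)$ rather than an eigenvalue of infinite multiplicity.
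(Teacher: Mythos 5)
Your proposal is correct, but note that the paper itself offers no proof of Theorem \ref{theo:CF}: it is quoted as classical background with a pointer to \cite{helffer_book}, so there is no internal argument to compare against. What you wrote is essentially the standard Courant--Fischer proof via the spectral resolution of the self-adjoint extension: the explicit choices $g_i=e_i$ give the lower bounds $\lambda_n(T)\geq \mu_n$ (case $(a)$) and $\lambda_n(T)\geq\mu$ (case $(b)$), while the dimension counts in $\Span(e_0,\dots,e_n)$, respectively in $\ran E_{[\mu,\mu+\eps]}$, give the matching upper bounds, and the tail assertions in $(b)$ follow as you say. Two points deserve tightening. First, the inference ``$\mu$ is an accumulation point of $\sigma(T)$, hence $\ran E_{[\mu,\mu+\eps]}$ is infinite-dimensional'' is not valid in general, since the spectrum could a priori accumulate at $\mu$ only from below; what rescues it in case $(b)$ is exactly the hypothesis of that case, namely that only finitely many ($\leq n$) spectral points lie in $[0,\mu)$, so either say this explicitly or invoke directly the characterization $\lambda\in\sigma_{ess}(T)$ if and only if $\dim \ran E_{(\lambda-\eps,\lambda+\eps)}=\infty$ for all $\eps>0$ and subtract the finite-dimensional piece below $\mu$. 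Second, simplicity of eigenvalues (the Sturm--Liouville fact you lean on at the end) is a convenience rather than a necessity: finiteness of the multiplicities of discrete eigenvalues already makes all the dimension counts work, simplicity being used only to write the spectral subspaces below $\mu_n$ or $\mu$ as spans of single eigenfunctions, which is harmless in the paper's one-dimensional setting. With these small clarifications your argument is a complete and standard proof of the cited max-min principle.
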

By abuse of language, all the elements $\lambda_n (T)$ arising in the max-min principle will be called eigenvalues in the remainder of the paper. \smallskip

As usual a density argument allows to take the infimum above over $\C_0 ^\infty (\R)$ instead of $\D(T)$. Note also that the supremum is realized when the $g_i$ are the associated eigenfunctions and it holds at least if the spectrum is discrete, i.e. $\sigma_{ess} (T) = \emptyset$ - in this case $(b)$ does not occur and the supremum (resp. infimum) is a maximum (resp. mimimum), justifying the standard terminology ``max-min principle". In particular the sequence of eigenvalues tends to infinity as $n$ goes to infinity. In the diffusion framework $(i)$, $\sigma_{ess} (T) = \emptyset$ if $V_a$ is uniformly convex or when
\begin{equation}
\label{eq:ess_empty}
\lim_{\vert x\vert \to +\infty} \frac{1}{2} \, V_a ' (x) ^2 - V_a ''(x) = +\infty,
\end{equation}
cf. \cite{helffer_book}. For instance a classical framework involving a discrete spectrum is the standard Gaussian case, i.e. the original potential $V$ is uniformly convex and given by $V = \vert \cdot \vert ^2 /2$ and $\mu$ is the standard Gaussian probability distribution. We have $\sigma(-L) = \N$ and the associated eigenfunctions are the Hermite polynomials. Another example of interest satisfying this time \eqref{eq:ess_empty} is the so-called Subbotin (or exponential-power) distribution with potential of the type $V = \vert \cdot \vert ^\alpha /\alpha$ with $\alpha >1$ (for $\alpha \in (1,2)$ its regularized version at the origin has to be considered), for which the eigenvalues are not known explicitly. We will come back to these basic examples later. \smallskip

The first positive eigenvalue is of crucial importance since it governs the exponential speed of convergence in $L^2(\nu)$ of the semigroup $(e^{- t T})_{t\geq 0}$. In the Schr\"odinger case $(ii)$, if $\lambda_0 (- L_{ab} + M_{a} ^b) >0$ then we have for every $f\in L^2 (\mu_{ab})$,
\begin{equation}
\label{eq:expo_cv_schro}
\Vert e^{t(L_{ab} - M_{a} ^b)} f \Vert _{L^2 (\mu_{ab})} \leq e^{-\lambda_0 (- L_{ab} + M_{a} ^b) t} \, \Vert f \Vert _{L^2 (\mu_{ab})} ,
\end{equation}
whereas in the diffusion setting $(i)$ the case $n=1$ is concerned and corresponds to the existence of the so-called spectral gap of the operator $-L_{a}$, i.e., $\lambda_1 (-L_{a}) >0$. Indeed for every function $f\in L^2 _0 (\mu_{a}) = \{ f \in L^2 (\mu_{a}) : f \perp _{\mu_a} 1 \}$, we have
\begin{equation}
\label{eq:expo_cv}
\Vert e^{ t L_{a} f } \Vert _{L^2 (\mu_{a})} \leq e^{-\lambda_1 (-L_{a}) t} \, \Vert f \Vert _{L^2 (\mu_{a})} .
\end{equation}
The existence of a spectral gap means that the first eigenvalue 0 is isolated: $0 \in \sigma_{disc} (-L_{a})$. It is the case as soon as the potential $V_{a}$ is uniformly convex, an instance of the celebrated Bakry-\'Emery criterion, or only convex \cite{kls,bobkov,BBCG} (such an assumption might be weakened to require only the convexity at infinity, at the price of a perturbation argument). Certainly, a stronger condition ensuring the existence of the spectral gap is $\lambda_1 (-L_{a}) \in \sigma_{disc} (-L_{a})$. More generally, as mentioned in the Introduction, a relevant criterion ensuring its existence is the formula of Chen and Wang, cf. \cite{chen_wang} and also \cite{bj, bjm2} where it has been revisited: if there exists a weight $b \in \FF$ such that $\inf_{x\in \R} \, M_{a} ^b (x) >0$, then
\begin{equation}
\label{eq:lambda1}
\lambda_1 (-L_{a}) \geq \inf_{x\in \R} \, M_{a} ^b (x) .
\end{equation}
Recently such an inequality has been successfully used to estimate the spectral gap in a large variety of examples, cf. \cite{bj, bjm2}. \smallskip

Another feature of the spectral gap in the diffusion case $(i)$ resides in the relationship with the regularity of the solution to the Poisson equation. More precisely if $\lambda_1 (-L_{a}) >0$ then for every centered function $f\in \C_0 ^\infty (\R)$ the Poisson equation
\begin{equation}
\label{eq:pois}
-L_{a} g = f,
\end{equation}
admits a unique solution $g  = (-L_{a})^{-1} f \in \D _0(-L_{a})$ which is in $\C^\infty (\R)$, where $\D _0(-L_{a}) = \{ f\in \D (-L_{a}) : f \perp_{\mu_a} 1 \}$. Here, the operator $(-L_{a})^{-1}$ or more generally $(-L_{a})^{-\alpha}$ for $\alpha>0$ is well-defined on $L^2 _0 (\mu_{a})$ as a Riesz-type potential:
\begin{equation}
\label{eq:Riesz}
(-L_{a})^{-\alpha} = \frac{1}{\Gamma(\alpha)} \, \int_0^{+\infty} t^{\alpha-1} \, e^{ t L_{a}} \, dt,
\end{equation}
where $\Gamma$ is the Gamma function $\Gamma (\alpha) = \int_0^{+\infty} t^{\alpha-1} e^{-t} \, dt$, cf. e.g. \cite{BGL}, since the inequality \eqref{eq:expo_cv} leads to the boundedness in $L^2_0 (\mu_{a})$ of the operator $(-L_{a})^{-\alpha}$, $\alpha >0$. \\ In the Schr\"odinger case $(ii)$ we use \eqref{eq:expo_cv_schro} and the same analysis as above remains valid provided the first eigenvalue $\lambda_0 (-L_{ab} + M_{a} ^b)$ is positive, at the price of removing the orthogonality condition $f \perp_{\mu_a} 1$ and replacing the spectral gap by $\lambda_0 (-L_{ab} + M_{a} ^b)$. Such Riesz-type representation will be used when necessary in the sequel.

\section{Main result}
\label{sect:main}
Before stating our main result Theorem \ref{theo:lambda-n} about eigenvalues estimates, let us present first some important ingredients that will be used in its proof.

\subsection{Johnsen's theorem revisited}
The first result that will be used in the sequel is Johnsen's theorem \cite{johnsen} which identifies the spectrum of the two operators involved in the (multidimensional version of the) intertwining of Lemma \ref{lemme:entrelacement}. Considering the original operator $L$ in the one-dimensional case, his result can be simplified (the restriction of the operators to the subspace generated by gradients is no longer required in dimension 1, since every smooth function is the gradient of its primitive functions; such an observation has already been used above when considering the essential self-adjointness property) and applies under a set of equivalent assumptions, among them $\lambda_1 (-L)>0$. His result is then the following.
\begin{theo}[Johnsen]
\label{theo:Johnsen_initial}
If a spectral gap holds, i.e., $\lambda_1 (-L)>0$, then we have equality of the spectra
$$
\sigma (-L) \backslash \{ 0 \} = \sigma ( - L + V'' ),
$$
with the analogous relation for the essential spectra. In terms of the eigenvalues given by the max-min principle, we have for every $n\in \N^*$,
$$
\lambda_n (-L ) = \lambda_{n-1} (- L + V'' ) .
$$
\end{theo}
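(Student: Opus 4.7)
The strategy is to exhibit an explicit unitary operator that intertwines the restriction of $-L$ to the subspace $L^2_0(\mu)=\{f\in L^2(\mu):\int f\,d\mu=0\}$ with the Schr\"odinger operator $-L+V''$ acting on all of $L^2(\mu)$. Once this is in place, the spectral identities become an immediate consequence of the invariance of spectrum and essential spectrum under unitary equivalence.

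First I would specialize the intertwining of Lemma \ref{lemme:entrelacement} to $a=b=1$, giving the identity $\partial(-L)f=(-L+V'')\partial f$ on $\C^\infty(\R)$, and combine it with the Dirichlet identity
$$
\int_\R (f')^2\,d\mu=\int_\R f\,(-L)f\,d\mu=\|(-L)^{1/2}f\|_{L^2(\mu)}^{2},
$$
valid for $f\in\C_0^\infty(\R)$. This Dirichlet identity says precisely that the derivation $\partial$ transports the form norm of $-L$ to the $L^2(\mu)$-norm. Under the assumption $\lambda_1(-L)>0$, the eigenvalue $0$ is isolated in $\sigma(-L)$, with the constants as its one-dimensional eigenspace; consequently $(-L)^{1/2}$ is a bijection from $\D((-L)^{1/2})\cap L^2_0(\mu)$ onto $L^2_0(\mu)$. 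I would then define
$$
U:=\partial\circ(-L)^{-1/2}:L^2_0(\mu)\longrightarrow L^2(\mu),
$$
and check two properties. Isometry follows directly from the Dirichlet identity applied to $(-L)^{-1/2}f$. Density of the range is obtained by a concrete primitive argument: for any $g\in\C_0^\infty(\R)$ its primitive $f$ is bounded, hence lies in $L^2(\mu)$, and after recentering $f\in L^2_0(\mu)$; then $g=U((-L)^{1/2}f)$, so the range is dense in $L^2(\mu)$. Thus $U$ extends to a unitary map from $L^2_0(\mu)$ onto $L^2(\mu)$.

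Next I would use the intertwining, together with the functional calculus for $(-L)^{1/2}$, to obtain $U(-L)=(-L+V'')U$ on a common core, and hence on the full self-adjoint domains by closure. Consequently $-L|_{L^2_0(\mu)}$ and $-L+V''$ on $L^2(\mu)$ are unitarily equivalent, so they share the same spectrum and the same essential spectrum. Since $\sigma(-L)=\{0\}\cup\sigma(-L|_{L^2_0(\mu)})$ (here the spectral gap ensures that $0$ is isolated and contributes nothing to the essential spectrum), this yields the first assertion. For the indexed equalities, restricting $-L$ to $L^2_0(\mu)$ simply deletes the eigenvalue $\lambda_0(-L)=0$ from the max-min list, so the max-min eigenvalues of $-L|_{L^2_0(\mu)}$ are $\lambda_1(-L),\lambda_2(-L),\ldots$, and the unitary equivalence identifies $\lambda_n(-L)$ with $\lambda_{n-1}(-L+V'')$.

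The step I expect to demand most care is the passage from the formal intertwining on $\C_0^\infty(\R)$ to the operator identity $U(-L)=(-L+V'')U$ on full self-adjoint domains. One must argue that $U$ is not merely a partial isometry but genuinely unitary, which uses the spectral gap to pin down the range of $(-L)^{1/2}$, and one must upgrade the commutation relation using the essential self-adjointness of both $-L$ and $-L+V''$ recorded in Section \ref{sect:prelim}. Up to these analytic checks, the rest of the argument is algebraic.
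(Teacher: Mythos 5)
Your proposal follows essentially the same route as the paper: the weighted-free intertwining $\partial(-L)f=(-L+V'')\partial f$ combined with the Riesz transform $U=\partial(-L)^{-1/2}$, shown to be unitary from $L^2_0(\mu)$ onto $L^2(\mu)$ thanks to the spectral gap, so that $(-L)|_{\D_0(-L)}$ and $-L+V''$ are unitarily equivalent and the max-min eigenvalues shift by one. The paper phrases the final identification of spectra via Weyl's criterion while you invoke invariance under unitary equivalence directly, but this is the same argument, and your added checks (isometry via the Dirichlet identity, density of the range via primitives of compactly supported functions) are correct fillings of the paper's sketch.
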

Roughly speaking, the main idea of Johnsen is to observe that we have
$$
- L + V'' = U \, (-L)|_{\D _0(-L)} \, U^* ,
$$
where $U$ is the so-called Riesz transform
$$
U = \partial (-L)^{-1/2} = (-L + V'') ^{-1/2} \partial ,
$$
correctly defined on $\D _0 (-L)$ (since it is assumed that $\lambda_1 (-L) >0$) and with values in $\D (-L + V'')$, which is a unitary transformation (a surjective isometry). In other words the operators $(-L) |_{\D _0(-L)}$ and $- L + V'' $ are unitarily equivalent. Then for every $\lambda\in \R$ and every sequence $(f_n)_{n\in \N} \subset \D _0(-L)$ we have
$$
\Vert (- L + V'' - \lambda I) \, g_n \Vert _{L^2 (\mu)} = \Vert (- L - \lambda I) \, f_n \Vert _{L^2 (\mu)}, \quad \mbox{with} \quad g_n = U f_n ,
$$
and Weyl's criterion allows to conclude. \smallskip

In the present work, we need an analogue of Johnsen's theorem \ref{theo:Johnsen_initial} adapted to the presence of a weight in the gradient appearing in the intertwining of Lemma \ref{lemme:entrelacement}. The exact computation of the spectral gap $\lambda_1 (- L_{a})$ in the assumption below being of difficult access in general, we rather use for applications Chen-Wang's formula \eqref{eq:lambda1}, i.e. if there exists $b \in \FF$ such that $\inf_{x\in \R} \, M_{a} ^b (x) >0$, then
$$
\lambda_1 (-L_{a}) \geq \inf_{x\in \R} \, M_{a} ^b (x) .
$$
The result is stated as follows.
\begin{theo}[Johnsen's theorem revisited]
\label{theo:johnsen}
Let $a\in \FF$ be such that the measure $\mu_a$ is finite. Assume moreover that $\lambda_1 (- L_{a}) >0$. Then for all weight $b\in \FF$ we have
$$
\sigma (-L_{a}) \backslash \{ 0\} = \sigma (- L_{ab} + M_{a} ^b),
$$
with the analogous relation for the essential spectra. In terms of the eigenvalues given by the max-min principle, we have for every $n\in \N^*$,
$$
\lambda_n (-L_{a}) = \lambda_{n-1} (- L_{ab} + M_{a} ^b) .
$$
\end{theo}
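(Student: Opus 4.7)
The plan is to adapt Johnsen's original argument to the weighted setting by constructing a Riesz-type transform that realizes a unitary equivalence between the restriction $(-L_a)|_{L^2_0(\mu_a)}$ and the Schr\"odinger operator $-L_{ab}+M_a^b$. Since $\lambda_1(-L_a)>0$, the fractional power $(-L_a)^{-1/2}$ is well defined on $L^2_0(\mu_a)$ via the Riesz formula \eqref{eq:Riesz}, and I would introduce the \emph{weighted Riesz transform}
$$U_b \;=\; \partial_b\,(-L_a)^{-1/2} \colon L^2_0(\mu_a)\longrightarrow L^2(\mu_{ab}).$$

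First I would check that $U_b$ is an isometry. Using $d\mu_{ab}=b^{-2}\,d\mu_a$ and $\partial_b=b\,\partial$, a direct computation (identical to the one carried out in the preliminaries to prove non-negativity of $-L_{ab}+M_a^b$) gives, for $h\in\C_0^\infty(\R)$,
$$\|\partial_b h\|^2_{L^2(\mu_{ab})} \;=\; \int_\R (h')^2\,d\mu_a \;=\; \langle h,(-L_a)h\rangle_{L^2(\mu_a)}.$$
Applying this with $h=(-L_a)^{-1/2}f$ yields $\|U_b f\|_{L^2(\mu_{ab})}=\|f\|_{L^2(\mu_a)}$ for every $f\in L^2_0(\mu_a)$. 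Next, the intertwining \eqref{eq:intert} applied to the same $h$ gives the operator-level identity
$$(-L_{ab}+M_a^b)\,U_b f \;=\; \partial_b(-L_a)h \;=\; U_b(-L_a) f,$$
so that $U_b$ conjugates $(-L_a)|_{\D_0(-L_a)}$ with $-L_{ab}+M_a^b$ on its range.

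The hard part will be the surjectivity of $U_b$, which is what turns the above into a genuine unitary equivalence. Since $U_b$ is isometric its range is automatically closed, so it suffices to prove that any $\varphi\in L^2(\mu_{ab})$ with $\int_\R \varphi\,\partial_b h\,d\mu_{ab}=0$ for every $h\in\C_0^\infty(\R)$ must vanish. Rewriting this as $\int_\R (\varphi/b)\,h'\,e^{-V_a}\,dx=0$ and using that $h'$ ranges over all mean-zero test functions as $h$ varies in $\C_0^\infty(\R)$, the characterization of distributional primitives forces $(\varphi/b)\,e^{-V_a}$ to be constant almost everywhere, i.e.\ $\varphi=C\,b\,e^{V_a}$. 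The finiteness of $\mu_a$ compels $V_a\to+\infty$ at both ends of the line, whence $\int_\R e^{V_a}\,dx=+\infty$, so that $\varphi\in L^2(\mu_{ab})$ is possible only if $C=0$. I expect this density step to require the most care, especially the implicit use of $\C_0^\infty(\R)$ as a common core (which relies on essential self-adjointness of both $-L_a$ and $-L_{ab}+M_a^b$) and the interplay between integrability of $e^{-V_a}$ and non-integrability of $e^{V_a}$.

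Once $U_b$ is unitary, the two spectral identities follow by standard transfer through Weyl's criterion: a sequence $(f_n)\subset\D_0(-L_a)$ with $\|f_n\|_{L^2(\mu_a)}=1$ and $(-L_a-\lambda)f_n\to 0$ is mapped to a Weyl sequence $(U_b f_n)$ for $-L_{ab}+M_a^b$ at the same $\lambda>0$, and the unitary $U_b^*$ provides the reverse direction; moreover unitarity preserves existence of convergent subsequences, so the essential/discrete dichotomy is transported as well. This yields $\sigma(-L_a)\setminus\{0\}=\sigma(-L_{ab}+M_a^b)$ together with the analogous identity for essential spectra. Finally, since $0\in\sigma_{disc}(-L_a)$ is isolated under the spectral-gap assumption, inspecting the max-min formulae of Theorem \ref{theo:CF} index by index produces the shift $\lambda_n(-L_a)=\lambda_{n-1}(-L_{ab}+M_a^b)$ for every $n\in\N^*$.
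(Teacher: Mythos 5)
You follow essentially the same route as the paper: the weighted Riesz transform $U_b=\partial_b(-L_a)^{-1/2}$ yields a unitary equivalence between $(-L_a)|_{\D_0(-L_a)}$ and $-L_{ab}+M_a^b$, and Weyl's criterion then transfers the spectra, the essential spectra and the max-min indices with the shift by one; the paper merely asserts unitarity ``as in Johnsen'', whereas you flesh out the isometry and the surjectivity. Two small repairs in the surjectivity step: (1) to reduce to the density of $\{\partial_b h : h\in\C_0^\infty(\R)\}$ you should note that these functions do lie in $\ran (U_b)$, via $\partial_b h=U_b(-L_a)^{1/2}h$, which is legitimate because $(-L_a)^{1/2}h\perp_{\mu_a}1$; (2) finiteness of $\mu_a$ does \emph{not} force $V_a\to+\infty$ at infinity (thin deep dips of $V_a$ are compatible with integrability of $e^{-V_a}$), but the conclusion you actually need, $\int_\R e^{V_a}\,dx=+\infty$, follows at once from Cauchy--Schwarz, since $+\infty=\int_\R dx\leq\bigl(\int_\R e^{-V_a}\,dx\bigr)^{1/2}\bigl(\int_\R e^{V_a}\,dx\bigr)^{1/2}$, and this gives $C=0$. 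With these adjustments your argument is correct and matches the paper's.
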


\begin{proof}
To get the result we proceed similarly to Johnsen, where this time we have
$$
- L_{ab} + M_{a} ^b = U \, (-L_{a})|_{\D _0 (-L _{a})} \, U^{*},
$$
the operator $U$ being the weighted Riesz transform
$$
U = \partial _{b} (-L_{a})^{-1/2} = (-L_{ab} + M_{a} ^b)^{-1/2} \partial_{b},
$$
defined from $\D _0 (-L _{a})$ to $\D (-L_{ab} + M_{a} ^b)$, which is also a unitary mapping.
\end{proof}
The form of the previous unitary mapping is not innocent, since it is nothing but the composition of the two unitary transformations already describes above: the classical intertwining and the $h$-transform.

\subsection{A key lemma}
Another result of interest is the following key lemma, emphasizing a particular property of the underlying eigenfunction associated to the first positive eigenvalue. Although it is part of the folklore, cf. for instance \cite{chen_wang,miclo,bj,bir}, let us give a proof of this property for completeness since it will be used many times in the sequel.
\begin{lemme}
\label{lemme:g1}
We have the following characterizations:
\begin{itemize}
\item[$\circ$] In the diffusion case $(i)$:
\begin{itemize}
\item if $g_1 ^{a}$ is an eigenfunction associated to the spectral gap $\lambda_1 (- L_{a})$, then up to a change of sign we have $(g_1 ^{a})' >0$;
\item reciprocally if $g^{a}$ is an eigenfunction of the diffusion operator $- L_{a}$ such that $(g^{a}) ' >0$ (up to a change of sign) then the corresponding eigenvalue is the spectral gap $\lambda_1 (-L_{a})$.
\end{itemize}
\item[$\circ$] In the Schr\"odinger case $(ii)$:
\begin{itemize}
\item if $\tilde{g_0}^{a,b}$ is an eigenfunction associated to the eigenvalue $\lambda_0 (-L_{ab} + M_{a} ^b)$ (this eigenfunction is called the ground state), then up to a change of sign we have $\tilde{g_0}^{a,b} >0$;
\item reciprocally if $\tilde{g}^{a,b}$ is an eigenfunction of the Schr\"odinger operator $-L_{ab} + M_{a} ^b$ such that $\tilde{g}^{a,b} >0$ (up to a change of sign) then the corresponding eigenvalue is $\lambda_0 (-L_{ab} + M_{a} ^b)$, the bottom of the spectrum of the operator $-L_{ab} + M_{a} ^b$.
\end{itemize}
\end{itemize}
\end{lemme}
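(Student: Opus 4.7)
The plan is to establish the Schr\"odinger statement first for an arbitrary weight $b\in\FF$, and then to derive the diffusion statement by applying the classical intertwining $(L_a f)' = (L_a - V_a'')\,f'$ (Lemma \ref{lemme:entrelacement} with $b=1$) together with Johnsen's revisited Theorem \ref{theo:johnsen} taken at $b=1$, which identifies $\lambda_1(-L_a)$ with $\lambda_0(-L_a+V_a'')$. Under this correspondence, an eigenfunction $g$ of $-L_a$ with eigenvalue $\lambda>0$ is mapped to the eigenfunction $g'$ of $-L_a+V_a''$ with the same eigenvalue, and conversely, so that both bullets of the diffusion case reduce to their Schr\"odinger counterparts.

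For the Schr\"odinger statement I set $T:=-L_{ab}+M_a^b$ and first prove the forward direction: any ground state $\tilde g_0^{a,b}$ is nonvanishing. The eigenvalue equation $Tg=\lambda_0(T)g$ is a second-order linear ODE with smooth coefficients, so Cauchy--Lipschitz forces $(\tilde g_0^{a,b})'(x_0)\neq 0$ at any zero $x_0$; in particular $\tilde g_0^{a,b}$ would then change sign at $x_0$. Using the quadratic-form identity written in Section \ref{sect:prelim}, $|\tilde g_0^{a,b}|$ lies in the form domain of $T$ and, since $(|\tilde g_0^{a,b}|')^2=((\tilde g_0^{a,b})')^2$ almost everywhere, has the same $L^2(\mu_{ab})$-norm and the same energy as $\tilde g_0^{a,b}$, hence the same Rayleigh quotient. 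Thus $|\tilde g_0^{a,b}|$ is also a minimizer; the Euler--Lagrange equation combined with one-dimensional elliptic regularity then forces $|\tilde g_0^{a,b}|$ to be a classical smooth eigenfunction, contradicting the corner it has at $x_0$. Hence $\tilde g_0^{a,b}$ can be chosen strictly positive. For the reciprocal, let $\tilde g^{a,b}>0$ be an eigenfunction with eigenvalue $\mu$ and $\tilde g_0^{a,b}>0$ a ground state produced by the previous step; if $\mu\neq\lambda_0(T)$, the self-adjointness of $T$ imposes $\int_\R \tilde g^{a,b}\,\tilde g_0^{a,b}\,d\mu_{ab}=0$, impossible for two strictly positive integrands, so $\mu=\lambda_0(T)$.

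The diffusion case then follows by intertwining. If $g_1^a$ is an eigenfunction of $-L_a$ for $\lambda_1(-L_a)$, then $(g_1^a)'$ is an eigenfunction of $-L_a+V_a''$ with eigenvalue $\lambda_1(-L_a)=\lambda_0(-L_a+V_a'')$, i.e.\ a ground state, so the forward Schr\"odinger step yields $(g_1^a)'>0$ up to a sign change. Conversely, if $g^a$ is an eigenfunction of $-L_a$ with $(g^a)'>0$ (so in particular non-constant and hence with eigenvalue $\mu>0$), then $(g^a)'$ is a strictly positive eigenfunction of $-L_a+V_a''$ for the eigenvalue $\mu$, and the reciprocal Schr\"odinger step combined with Theorem \ref{theo:johnsen} gives $\mu=\lambda_0(-L_a+V_a'')=\lambda_1(-L_a)$.

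The hard part will be the non-vanishing step for the Schr\"odinger ground state: one must make rigorous the folklore ``a minimizer of the Rayleigh quotient cannot have a corner'' argument by combining Cauchy--Lipschitz for the underlying ODE with elliptic regularity. A subtle point is that $\lambda_0(T)$ need not belong to the discrete spectrum, so uniqueness of the ground state up to a scalar is not available a priori; the argument above nonetheless bypasses this issue by producing the contradiction directly from the regularity mismatch of $|\tilde g_0^{a,b}|$ at its zero.
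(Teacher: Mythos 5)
Your route is genuinely different from the paper's: the paper proves the diffusion statements directly --- the variational formula for $\lambda_1(-L_a)$ is shown to be saturated by monotone functions (replace $f$ by a primitive of $\vert f'\vert$, which preserves the energy and increases the variance), Cauchy--Lipschitz then upgrades monotonicity of $g_1^a$ to $(g_1^a)'\neq 0$, and the converse is obtained from Chen--Wang's formula \eqref{eq:lambda1} with the weight $b=1/(g^a)'$, for which $M_a^b$ is the constant $\mu$ --- and only afterwards transfers to the Schr\"odinger case through the intertwining and Theorem \ref{theo:johnsen}. You argue in the opposite direction, and your forward Schr\"odinger step (the Perron--Frobenius-type argument that $\vert \tilde g_0^{a,b}\vert$ is again a minimizer of the Rayleigh quotient, hence a smooth eigenfunction, contradicting the corner at a zero) is a legitimate alternative, modulo the form-domain technicality you flag yourself: $M_a^b$ need not be bounded below, so the stability of the form domain under $f\mapsto\vert f\vert$ and the identity between the energies of $f$ and $\vert f\vert$ require justification, whereas the paper's route avoids this issue entirely.

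However, your reciprocal Schr\"odinger step has a genuine gap: you compare the given positive eigenfunction with ``a ground state produced by the previous step'', but the previous step produces nothing --- it only shows that a ground state, \emph{if it exists}, is positive. The lemma explicitly allows $\lambda_0(-L_{ab}+M_a^b)$ to be merely the bottom of the spectrum (possibly the bottom of the essential spectrum, not attained), and in that case there is no eigenfunction for the orthogonality argument to act on. The same defect propagates a circularity into your diffusion reciprocal: you invoke Theorem \ref{theo:johnsen} with $b=1$ to write $\lambda_0(-L_a+V_a'')=\lambda_1(-L_a)$, but that theorem assumes $\lambda_1(-L_a)>0$, which at that stage is precisely what is not yet known (a priori $\lambda_1(-L_a)$ could vanish as the bottom of the essential spectrum). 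The fix is the ground-state transform that the paper packages as Chen--Wang's formula: writing $f=\tilde g^{a,b} h$ with $h$ smooth and compactly supported, one gets $\int_\R f\,(-L_{ab}+M_a^b)f\,d\mu_{ab}-\mu\Vert f\Vert^2_{L^2(\mu_{ab})}=\int_\R (\tilde g^{a,b})^2 (h')^2\,d\mu_{ab}\ge 0$, hence $\lambda_0(-L_{ab}+M_a^b)\ge\mu$ directly from the given positive eigenfunction; equivalently, in the diffusion case, apply \eqref{eq:lambda1} with $b=1/(g^a)'$, whose hypothesis $\inf_{x\in\R} M_a^b(x)=\mu>0$ is checked directly and needs no prior knowledge that the spectral gap is positive.
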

\begin{proof}
First note that according to the intertwining of Lemma \ref{lemme:entrelacement}, both problems for cases $(i)$ and $(ii)$ are actually equivalent, the relationship between $g_1 ^{a}$ and $\tilde{g_0} ^{a,b}$ being $\tilde{g_0} ^{a,b} = \partial_{b} g_1 ^{a}$. Indeed we have
$$
(- L_{ab} + M_{a} ^b) \, (\partial_b g_1 ^a) = - \partial_b L_a g_1 ^a = \lambda_1 (-L_a) \, \partial_b g_1 ^a = \lambda_0 (- L_{ab} + M_a ^b) \, \partial_b g_1 ^a ,
$$
where to get the last equality we used Theorem~\ref{theo:johnsen} with $n=1$. \smallskip

Hence let us prove the desired conclusion only in the diffusion case $(i)$. Our attention is concentrated on the first item. We claim that in the variational formula of the spectral gap,
\begin{eqnarray*}
\lambda_1 (-L_{a}) & = & \inf_{f\in \D _0 (-L_{a})} \frac{- \int_\R f \, L_{a} f \, d\mu_{a}}{\int_\R f^2 \, d\mu_{a}} \\
& = & \inf_{f\in \D (-L_{a})} \frac{\int_\R {f'} ^2 \, d\mu_{a}}{\Var _{\mu_{a}} (f) } ,
\end{eqnarray*}
where $\Var _{\mu_{a}} (f)$ denotes the variance of $f$ under the measure $\mu_{a}$, i.e.,
$$
\Var_{\mu_{a}} (f) = \int_\R f^2 \, d\mu_{a} - \left( \int_\R f \, d\mu_{a} \right) ^2 ,
$$
the infimum might be taken over monotonic functions. Indeed if $f\in \D (- L _{a})$ is a given function, then a primitive function $g$ of the function $\vert f'\vert $ which belongs to the space $\D (-L_a)$ satisfies
$$
\int_\R {g'} ^2 \, d\mu_{a} = \int_\R {f'}^2 \, d\mu_{a} ,
$$
and also
$$
\Var_{\mu_{a}} (f) \leq \Var_{\mu_{a}} (g),
$$
since the variance trivially rewrites as
$$
\Var_{\mu_{a}} (f) = \frac{1}{2} \, \int_\R \int_\R \left( \int_x ^y f'(t) \, dt \right)^2 \, d{\mu_{a}} (x) d{\mu_{a}} (y) .
$$
Therefore the first eigenfunction is monotonic. Now assume by contradiction that there exists some $x_0\in \R$ such that $ (g_1 ^{a})'(x_0)=0$. Hence we have $g_1 ^{a} (x_0)\neq 0$ (otherwise $g_1 ^{a}$ would be identically 0 by the Cauchy-Lipschitz theorem). Since
$$
- L _{a} g_1 ^{a} = \lambda_1 (-L_{a}) \, g_1 ^{a},
$$
we obtain $(g_1 ^{a})''(x_0) \neq 0$ and thus $g_1 ^{a}$ admits a local extrema in $x_0$, which contradicts its monotonicity. \smallskip

\noindent Let us now briefly establish the converse. Since we have
$$
- L _{a} g ^{a} = \lambda (-L_{a}) \, g ^{a},
$$
with $\lambda (-L_{a}) \neq 0$, the eigenfunction $g ^{a}$ being non-constant, it yields
$$
\lambda_1 (-L_{a}) \leq \lambda (-L_{a}),
$$
by the very definition of the spectral gap. Then the reverse inequality is obtained by choosing in Chen-Wang's formula \eqref{eq:lambda1} the function $b = 1/ (g ^{a}) '$. The proof is complete.
\end{proof}

\subsection{Main result}

Now we are in position to state the main result of the present paper, contained in Theorem~\ref{theo:lambda-n}, corresponding to lower and upper variational formulae for the eigenvalues of the original operator $- L$, generalizing to higher eigenvalues the famous Chen-Wang formula \eqref{eq:lambda1} on the spectral gap. In particular we exhibit the conditions under which the optimality holds. Let us briefly introduce some notation. If for some $i\in \N^*$ we have $\lambda_i (-L) \in \sigma_{disc} (-L)$ (resp. $\lambda_1 (-L_a) \in \sigma_{disc} (-L_a)$ for some $a\in \FF$), we denote $g_i$ (resp. $g_1 ^a$) an eigenfunction (recall that the associated eigenspaces are one-dimensional and moreover by ellipticity these eigenfunctions are in $\C^\infty (\R)$ since the potentials $V$ and $V_a$ are in $\C^\infty (\R)$). By Lemma \ref{lemme:g1} we know that the eigenfunction associated to the spectral gap is of constant sign and does not vanish. To let the forthcoming recursive argument start consistently, we introduce below the artificial weight $a_0 = 1$ so that $L_{a_0} = L$, $\mu_{a_0} = \mu$ and $g_1 ^{a_0} = g_1$. Finally, anticipating on the notation of Theorem \ref{theo:lambda-n} below, we recall that by Chen-Wang's formula \eqref{eq:lambda1} the existence of some $a_i \in \FF$ such that $\inf_{x\in \R} \, M_{a_0 \ldots a_{i-1}} ^{a_i} (x) >0$ is a sufficient condition ensuring that $\lambda_1 (-L_{a_0 \ldots a_{i-1}}) >0$, provided the measure $\mu_{a_0 \ldots a_{i-1}}$ is finite.
\begin{theo}
\label{theo:lambda-n}
\textbf{Two-sided estimates}. Given $n\in \N^*$, we have the following estimates:
\begin{equation}
\label{eq:lambda_n_init}
\sup_{a_1, \dots, a_n} \, \sum_{i= 1} ^{n } \inf_{x\in \R} M_{a_0 \ldots a_{i-1}} ^{a_i}(x) \leq \lambda_n (-L) \leq \inf_{a_1, \dots, a_n} \, \sum_{i= 1} ^{n } \sup_{x\in \R} M_{a_0 \ldots a_{i-1}} ^{ a_i} (x),
\end{equation}
where the supremum in the left-hand-side (resp. the infimum in the right-hand-side) runs over all functions $a_1,\ldots, a_{n} \in \FF$ such that for all $i\in \{ 1, \ldots, n \}$,

$\circ$ the measure $\mu_{a_0 \ldots a_{i-1}}$ is finite;

$\circ$ the spectral gap exists, i.e., $\lambda_1 (-L_{a_0 \ldots a_{i-1}}) >0$.

\noindent Above we adopt the convention that the left-hand-side (resp. right-hand-side) is infinite if at least one of the $M_{a_0 \ldots a_{i-1}} ^{a_i}$ is not bounded from below (resp. above). \smallskip

\noindent \textbf{Optimality}. Given $n\in \N^*$, if $\lambda_i (-L) \in \sigma_{disc}(-L)$ for every $i \in \{ 1, \ldots, n \}$ then $\lambda_1 (- L_{a_0 \ldots a_{i-1}}) \in \sigma_{disc}(-L_{a_0 \ldots a_{i-1}})$ for every $i \in \{ 1, \ldots, n \}$, the $a_i$ being given recursively by
\begin{equation}\label{eq:ai}
a_i = \frac{1}{(g_1 ^{a_0 \ldots a_{i-1}}) {'}} , \quad i.e., \quad \partial _{a_i} \ldots \partial _{a_1} g_i = 1.
\end{equation}
In this case the equalities hold in \eqref{eq:lambda_n_init} and we have
\begin{equation}
\label{eq:gap_consecutive}
\lambda_n (-L) = \sum_{i= 1} ^{n} \lambda_{1} (- L_{a_0 \ldots a_{i-1}}) .
\end{equation}
\end{theo}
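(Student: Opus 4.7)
The plan is to iterate Johnsen's revisited theorem (Theorem~\ref{theo:johnsen}), combined with the elementary observation, immediate from the max-min principle, that adding a multiplicative potential to a self-adjoint operator shifts its whole spectrum by at least the infimum and at most the supremum of the potential. Concretely, starting from $-L = -L_{a_0}$ and picking an admissible weight $a_1$, Theorem~\ref{theo:johnsen} identifies
$$
\lambda_n(-L_{a_0}) = \lambda_{n-1}(-L_{a_0 a_1} + M_{a_0}^{a_1}),
$$
and hence
$$
\lambda_{n-1}(-L_{a_0 a_1}) + \inf_{\R} M_{a_0}^{a_1} \;\leq\; \lambda_n(-L) \;\leq\; \lambda_{n-1}(-L_{a_0 a_1}) + \sup_{\R} M_{a_0}^{a_1}.
$$
Iterating with the remaining weights $a_2, \ldots, a_n$ in turn and terminating at $\lambda_0(-L_{a_0 \ldots a_n}) = 0$ produces~\eqref{eq:lambda_n_init} for the fixed tuple $(a_1, \ldots, a_n)$. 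Taking the supremum (resp.\ infimum) over admissible tuples on the lower (resp.\ upper) bound concludes the two-sided estimate.

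For the optimality part, assume $\lambda_i(-L) \in \sigma_{disc}(-L)$ for each $i \in \{1, \ldots, n\}$ and choose inductively $a_i = 1/(g_1^{a_0 \ldots a_{i-1}})'$; this is legitimate since by Lemma~\ref{lemme:g1} the derivative of $g_1^{a_0 \ldots a_{i-1}}$ is of constant sign and never vanishes, so $a_i \in \FF$. With $h = g_1^{a_0 \ldots a_{i-1}}$ and $b = 1/h' = a_i$, the third expression for $M_a^b$ in~\eqref{eq:Ma_derivee} collapses to a constant:
$$
M_{a_0 \ldots a_{i-1}}^{a_i} = \frac{(-L_{a_0 \ldots a_{i-1}} g_1^{a_0 \ldots a_{i-1}})'}{(g_1^{a_0 \ldots a_{i-1}})'} = \lambda_1(-L_{a_0 \ldots a_{i-1}}).
$$
Hence $\inf_{\R} M_{a_0 \ldots a_{i-1}}^{a_i} = \sup_{\R} M_{a_0 \ldots a_{i-1}}^{a_i}$ at every step, the two bounds in~\eqref{eq:lambda_n_init} collapse to the same sum, and this sum equals $\lambda_n(-L)$, which is~\eqref{eq:gap_consecutive}. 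The identity $\partial_{a_i} \ldots \partial_{a_1} g_i = 1$ in~\eqref{eq:ai} then follows by induction from the intertwining: $\partial_{a_j}$ maps an eigenfunction of $-L_{a_0 \ldots a_{j-1}}$ with eigenvalue $\mu$ to an eigenfunction of $-L_{a_0 \ldots a_j}$ with eigenvalue $\mu - \lambda_1(-L_{a_0 \ldots a_{j-1}})$, so after $i-1$ steps $g_i$ is sent to a multiple of $g_1^{a_0 \ldots a_{i-1}}$, and one more application of $\partial_{a_i}$ yields the constant $1$.

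The hard part is to propagate the hypotheses of Theorem~\ref{theo:johnsen} along the iteration, namely at each level the finiteness of $\mu_{a_0 \ldots a_i}$, the positivity of $\lambda_1(-L_{a_0 \ldots a_i})$, and, for the optimality argument, the fact that $\lambda_1(-L_{a_0 \ldots a_i}) \in \sigma_{disc}$. Under the canonical choice above the potential is constant, so Theorem~\ref{theo:johnsen} reads $\sigma(-L_{a_0 \ldots a_i}) = (\sigma(-L_{a_0 \ldots a_{i-1}}) \setminus \{0\}) - \lambda_1(-L_{a_0 \ldots a_{i-1}})$ with the analogous identification for the essential parts, and combined with the simplicity of one-dimensional eigenvalues this preserves both the positivity and the discreteness of the new spectral gap. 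Finiteness of $\mu_{a_0 \ldots a_i}$ is the subtlest point and follows from the fact that $\partial_{a_i} g_1^{a_0 \ldots a_{i-1}} \equiv 1$ is, via the intertwining, proportional to the ground state of the Schr\"odinger operator at level $i$, hence square-integrable with respect to $\mu_{a_0 \ldots a_i}$.
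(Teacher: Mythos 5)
Your route is the paper's: iterate Theorem~\ref{theo:johnsen} together with the elementary max--min perturbation bound to get the two-sided estimate, and for optimality take $a_i = 1/(g_1^{a_0\ldots a_{i-1}})'$ so that, by \eqref{eq:Ma_derivee}, the potential $M_{a_0\ldots a_{i-1}}^{a_i}$ is the constant $\lambda_1(-L_{a_0\ldots a_{i-1}})$, propagating discreteness, positivity of the gap and finiteness of the measures by induction. One step, however, does not hold as written: the termination ``$\lambda_0(-L_{a_0\ldots a_n})=0$''. The hypotheses only require $\mu_{a_0\ldots a_{i-1}}$ to be finite for $i\le n$, so $\mu_{a_0\ldots a_n}$ may be infinite and then $\lambda_0(-L_{a_0\ldots a_n})$ can be strictly positive; the paper stresses exactly this point and, in the lower bound, only uses $\lambda_0(-L_{a_0\ldots a_n})\ge 0$ before discarding the term. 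For your lower bound the conclusion therefore survives, but in the upper-bound direction the iteration leaves you with $\lambda_n(-L)\le \lambda_0(-L_{a_0\ldots a_n})+\sum_{i=1}^n\sup_{x}M_{a_0\ldots a_{i-1}}^{a_i}(x)$, where the boundary term now enters with the unfavorable sign and cannot simply be dropped; asserting it vanishes is the one claim that is false in general, so some additional argument is needed there (the paper itself is terse on this direction, but it never invokes $\lambda_0(-L_{a_0\ldots a_n})=0$).

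A second, more local issue is that your finiteness argument for $\mu_{a_0\ldots a_i}$ is circular as phrased: to identify the constant function $1=\partial_{a_i}g_1^{a_0\ldots a_{i-1}}$ with the $L^2(\mu_{a_0\ldots a_i})$ ground state you must already know $1\in L^2(\mu_{a_0\ldots a_i})$, which is precisely the finiteness you want. The paper's direct computation is the right fix: since $a_i=1/(g_1^{a_0\ldots a_{i-1}})'$ one has $\mu_{a_0\ldots a_i}(\R)=\int_\R \bigl((g_1^{a_0\ldots a_{i-1}})'\bigr)^2\,d\mu_{a_0\ldots a_{i-1}}=\lambda_1(-L_{a_0\ldots a_{i-1}})\int_\R (g_1^{a_0\ldots a_{i-1}})^2\,d\mu_{a_0\ldots a_{i-1}}<+\infty$, because $g_1^{a_0\ldots a_{i-1}}$ is a genuine $L^2$ eigenfunction (equivalently, this is the isometry property of the weighted Riesz transform). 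With these two repairs, the remainder of your induction --- the spectrum shifted by the constant $\lambda_1(-L_{a_0\ldots a_{i-1}})$, simplicity of one-dimensional eigenvalues giving positivity of the new gap, the identity $\partial_{a_i}\cdots\partial_{a_1}g_i=1$ via the iterated intertwining, and the resulting equalities \eqref{eq:gap_consecutive} --- coincides with the paper's proof.
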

\begin{proof}
First let us establish the inequality in the left-hand-side of \eqref{eq:lambda_n_init}, the other sense being somewhat similar by reversing the sign in the forthcoming inequalities. Pick some functions $a_1,\ldots, a_n\in \FF$ such that for all $i\in \{ 1, \ldots, n \}$ the measure $\mu_{a_0 \ldots a_{i-1}}$ is finite together with $\lambda_1 (-L_{a_0 \ldots a_{i-1}}) >0$. The strategy of the proof of the desired lower bound is to proceed recursively on $i\in \{ 1, \ldots, n \}$. The assumptions under consideration for $i=1$ are $\lambda_1 (-L_{a_0}) >0$ and the finiteness of the measure $\mu_{a_0}$ (the latter is an hypothesis assumed through the whole paper), so that by Theorem \ref{theo:johnsen} applied with $(a,b) = (a_0,a_1)$ we get
$$
\lambda_n (- L_{a_0}) = \lambda_{n-1} (- L_{a_0 a_1} + M_{a_0} ^{a_1}) \geq \lambda_{n-1} (- L_{a_0 a_1} ) + \inf_{x\in \R} \, M_{a_0} ^{a_1} (x),
$$
the inequality being a direct consequence of the max-min principle. Next we iterate the procedure: since $\mu_{a_0 a_1}$ is assumed to be finite and $\lambda_1 (-L_{a_0 a_1}) >0$, Theorem \ref{theo:johnsen} applied now with $(a,b) = (a_0 a_1 ,a_2)$ gives
$$
\lambda_{n-1} (- L _{a_0 a_1}) = \lambda_{n-2} (- L_{a_0 a_1 a_2} + M_{a_0 a_1} ^{a_2}) \geq \lambda_{n-2} (- L_{a_0 a_1 a_2} ) + \inf_{x\in \R} \, M_{a_0 a_1 } ^{a_2} (x),
$$
so that combining both estimates leads to the inequality
$$
\lambda_n (- L_{a_0}) \geq  \lambda_{n-2} (- L_{a_0 a_1 a_2}) + \inf_{x\in \R} \, M_{a_0} ^{a_1} (x) + \inf_{x\in \R} \, M_{a_0 a_1} ^{a_2} (x).
$$
The mechanism of a recursive procedure is now clear and entails the desired lower bound after taking the supremum over such functions $a_i$, the final step of the recursion being
\begin{eqnarray*}
\lambda_n (- L_{a_0}) & \geq & \lambda_0 (-L_{a_0 \ldots a_n}) + \sum_{i=1} ^n \inf_{x\in \R} \, M_{a_0 \ldots a_{i-1}} ^{a_i} (x) \\
& \geq & \sum_{i=1} ^n \inf_{x\in \R} \, M_{a_0 \ldots a_{i-1}} ^{a_i} (x).
\end{eqnarray*}
Note that $\lambda_0 (-L_{a_0 \ldots a_{n}})$ is non-negative but has no reason a priori to vanish since we do not assume the finiteness of the measure $\mu_{a_0 \ldots a_n}$. Finally taking the suprema over the admissible functions $a_1,\ldots, a_n$ yields the desired result. \smallskip

From now on, let us focus our attention on the possible optimality in the inequalities \eqref{eq:lambda_n_init}. According to the identity \eqref{eq:Ma_derivee}, if we define the functions $h_i$ such that $h_i ' = 1/a_i \in \FF$, then the inequalities \eqref{eq:lambda_n_init} might be rewritten as
\begin{equation}
\label{eq:lambda_n}
\sup \, \sum_{i= 1} ^{n } \inf_{x\in \R} \, \frac{(-L_{a_0 \ldots a_{i-1}} h_i)' (x)}{h_i '(x)} \leq \lambda_n (-L) \leq \inf \, \sum_{i= 1} ^{n } \sup_{x\in \R} \, \frac{(-L_{a_0 \ldots a_{i-1}} h_i)' (x)}{h_i ' (x)}.
\end{equation}
Therefore the desired optimality result holds if we show that $\lambda_1 (-L _{a_0 \ldots a_{i-1}}) \in \sigma_{disc}(-L _{a_0 \ldots a_{i-1}})$ for every $i \in \{ 1, \ldots, n\}$, so that the extrema in the inequalities \eqref{eq:lambda_n} are realized for the functions $h_i$ chosen as the eigenfunctions $g_1 ^{a_0 \ldots a_{i-1}}$ associated to the $\lambda_1 (-L _{a_0 \ldots a_{i-1}})$. These functions verify $h_i ' \in \FF$ by Lemma~\ref{lemme:g1}. \smallskip

Let us start by choosing the function $h_1$ as an eigenfunction $g_1 ^{a_0}$ associated to $\lambda_1 (-L_{a_0})$. We apply Theorem~\ref{theo:johnsen} with $(a,b) = (a_0,a_1)$ to get for all $k \in \{ 1, \ldots, n \}$,
$$
\lambda_k (-L_{a_0}) = \lambda_{k-1} (-L_{a_0 a_1} + M_{a_0} ^{a_1})  = \lambda_{k-1} (-L_{a_0 a_1}) + \lambda_1 (-L_{a_0}),
$$
because with this choice we have $M_{a_0} ^{a_1} = \lambda_1 (-L_{a_0})$ according to \eqref{eq:Ma_derivee}. Since we assumed $\lambda_k (-L) \in \sigma_{disc}(-L_{a_0})$ for every $k \in \{ 1, \ldots, n\}$, one deduces that $\lambda_{k-1} (-L_{a_0 a_1}) \in \sigma_{disc}(-L_{a_0 a_1})$ for every $k \in \{ 1, \ldots, n \}$. In particular it contains the eigenvalue $\lambda_{1} (-L_{a_0 a_1})$ which reveals to be the spectral gap of the operator $-L_{a_0 a_1}$ since the measure $\mu_{a_0 a_1}$ is finite, i.e.,
$$
\mu_{a_0 a_1} (\R) = \int_\R \left( (g_1 ^{a_0}) ' \right) ^2 \, d\mu_{a_0} < +\infty ,
$$
the function $g_1 ^{a_0}$ being an eigenfunction of the operator $-L_{a_0}$. \smallskip

Now we go one step beyond. The preceding argument allows us to choose for $h_2$ an eigenfunction $g_1 ^{a_0 a_1}$ associated to $\lambda_{1} (-L_{a_0 a_1})$. The measure $\mu_{a_0 a_1}$ being finite, we can apply Theorem~\ref{theo:johnsen} with $(a,b) = (a_0 a_1,a_2)$ to get for every $k \in \{ 1, \ldots, n-1 \}$,
$$
\lambda_{k} (-L_{a_0 a_1}) = \lambda_{k-1} (- L_{a_0 a_1 a_2} + M_{a_0 a_1} ^{a_2} ) = \lambda_{k-1} (- L_{a_0 a_1 a_2}) + \lambda_{1} (-L_{a_0 a_1}) ,
$$
where we have $M_{a_0 a_1} ^{a_2} = \lambda_{1} (-L_{a_0 a_1})$ by the identity \eqref{eq:Ma_derivee}. As above, the fact that $\lambda_{k} (- L_{a_0 a_1}) \in \sigma_{disc}(-L_{a_0 a_1})$ for every $k \in \{ 1, \ldots, n-1 \}$ implies that $\lambda_{k-1} (- L_{a_0 a_1 a_2}) \in \sigma_{disc}(-L_{a_0 a_1 a_2})$ for every $k \in \{ 1, \ldots, n-1 \}$, and so $\lambda_{1} (- L_{a_0 a_1 a_2}) $ is the spectral gap of the operator $- L_{a_0 a_1 a_2}$ since the measure $\mu_{a_0 a_1 a_2}$ is also finite:
$$
\mu_{a_0 a_1 a_2} (\R) = \int_\R \left( (g_1 ^{a_0 a_1}) {'} \right)  ^2 \, d\mu_{a_0 a_1} < +\infty ,
$$
the function $g_1 ^{a_0 a_1}$ being an eigenfunction of the operator $-L_{a_0 a_1}$. \smallskip

Following the previous analysis, we iterate the argument by choosing for each function $h_i$ an eigenfunction $g_1 ^{a_0 \ldots a_{i-1}}$ associated to the eigenvalue $\lambda_{1} (-L_{a_0 \ldots a_{i-1}})$, which lies in the set $\sigma_{disc} (-L_{a_0 \ldots a_{i-1}})$ according to the preceding step of the recursion, the measure $\mu_{a_0 \ldots a_{i-1}}$ being also finite. Hence we can apply Theorem~\ref{theo:johnsen} with $(a,b) = (a_0 \ldots a_{i-1},a_i)$ to get for every $k \in \{ 1, \ldots, n-(i-1) \}$,
$$
\lambda_{k} (-L_{a_0 \ldots a_{i-1}}) = \lambda_{k-1} (- L_{a_0\ldots a_{i}} + M_{a_0 \ldots a_{i-1}} ^{a_i}) = \lambda_{k-1} (- L_{a_0 \ldots a_{i}}) + \lambda_{1} (-L_{a_0 \ldots a_{i-1}}),
$$
thanks to the identity \eqref{eq:Ma_derivee} giving us $M_{a_0 \ldots a_{i-1}} ^{a_i} = \lambda_{1} (-L_{a_0 \ldots a_{i-1}})$. Since the previous step in the iterative procedure tells us that $\lambda_k (- L_{a_0 \ldots a_{i-1}}) \in \sigma_{disc}(- L_{a_0 \ldots a_{i-1}})$ for every $k \in \{1, \ldots, n-(i-1) \}$, we get that $\lambda_{k-1} (- L_{a_0 \ldots a_{i}}) \in \sigma_{disc}(- L_{a_0 \ldots a_{i}})$ for every $k \in \{ 1, \ldots, n-(i-1) \}$ and in particular the eigenvalue $\lambda_{1} (- L_{a_0 \ldots a_{i}})$ belongs to this set and is the spectral gap of the operator $- L_{a_0 \ldots a_{i}}$, the measure $\mu_{a_0 \ldots a_{i}}$ being finite. As such, we arrive at the end at
\begin{eqnarray*}
\lambda_n (-L_{a_0}) & = & \lambda_{n-1} (-L_{a_0 a_1}) + \lambda_1 (-L_{a_0}) \\
& = & \lambda_{n-2} (-L_{a_0 a_1 a_2}) + \lambda_{1} (-L_{a_0 a_1}) + \lambda_1 (-L_{a_0}) \\
& = & \ldots \\
& = & \lambda_{1} (-L_{a_0 \ldots a_{n-1}}) + \ldots + \lambda_{1} (-L_{a_0 a_1}) + \lambda_1 (-L_{a_0}) ,
\end{eqnarray*}
i.e., the identity \eqref{eq:gap_consecutive} holds. \smallskip

To conclude the proof of the optimality result, we only need to show that for all $i \in \{ 1, \ldots, n \}$ the preceding choice of the $a_i = 1/ (g_1 ^{a_0 \ldots a_{i-1}}) {'} $
coincides with the well-defined system
$$
\partial _{a_{i}} \ldots \partial _{a_1} g_i  =1,
$$
and this might be established through a recursive argument again. First this is true for $i=1$ by definition. Then for $i=2$, the eigenfunction $g_1 ^{a_0 a_1}$ associated to $\lambda_1 (-L_{a_0 a_1})$ might be deduced from the intertwining of Lemma \ref{lemme:entrelacement} with $(a,b) = (a_0,a_1)$:
\begin{eqnarray*}
L_{a_0 a_1} ( \partial_{a_1} g_2) & = & \partial_{a_1} L_{a_0} g_2 + M_{a_0} ^{a_1} \, \partial_{a_1} g_2 \\
& = & - \left( \lambda_2 (-L_{a_0}) - \lambda_1 (-L_{a_0}) \right) \, \partial_{a_1} g_2 \\
& = & - \lambda_1 (-L_{a_0 a_1}) \, \partial_{a_1} g_2,
\end{eqnarray*}
by the identity \eqref{eq:gap_consecutive} applied at rank $n=2$. Therefore by Lemma \ref{lemme:g1} we have $g_1 ^{a_0 a_1} = \partial_{a_1} g_2$ (up to some change of sign) and the derivative of $\partial_{a_1} g_2$ does not vanish, inducing indeed the well-defined identity $\partial_{a_2} \partial_{a_1} g_2 = 1$. \smallskip

Then the key point of the recursive argument is to notice that with this choice of the $a_i$, we obtain an intertwining of higher order by successive differentiations since the $M_{a_0 \ldots a_{j-1}} ^{a_j} $ are constant and equal $\lambda_1 (-L_{a_0 \ldots a_{j-1}})$, as previously observed: for all $i \in \{ 2, \ldots, n \}$,
\begin{eqnarray*}
\partial_{a_{i-1}} \ldots \partial_{a_1} L_{a_0} f & = & L_{a_0 \ldots a_{i-1}} (\partial_{a_{i-1}} \ldots \partial_{a_1} f) - \left( \sum_{j=1} ^{i-1} M_{a_0 \ldots a_{j-1}} ^{a_j} \right) \, \partial_{a_{i-1}} \ldots \partial_{a_1} f \\
& = & L_{a_0 \ldots a_{i-1}} (\partial_{a_{i-1}} \ldots \partial_{a_1} f) - \left( \sum_{j=1} ^{i-1} \lambda_1 (-L_{a_0 \ldots a_{j-1}}) \right) \, \partial_{a_{i-1}} \ldots \partial_{a_1} f \\
& = & L_{a_0 \ldots a_{i-1}} (\partial_{a_{i-1}} \ldots \partial_{a_1} f) - \lambda_{i-1}(-L_{a_0}) \, \partial_{a_{i-1}} \ldots \partial_{a_1} f,
\end{eqnarray*}
where in the last equality we used \eqref{eq:gap_consecutive} at rank $i-1$. In other words the intertwining of order $i-1$ is related to the eigenvalue $\lambda_{i-1}(-L_{a_0})$. Finally taking $f = g_i$ we get
\begin{eqnarray*}
L_{a_1, \ldots, a_{i-1}} (\partial_{a_{i-1}} \ldots \partial_{a_1} g_i) & = & \partial_{a_{i-1}} \ldots \partial_{a_1} L_{a_0} g_i + \lambda_{i-1}(-L_{a_0}) \, \partial_{a_{i-1}} \ldots \partial_{a_1} g_i \\
& = & - \left( \lambda_{i}(-L_{a_0}) - \lambda_{i-1}(-L_{a_0})\right) \, \partial_{a_{i-1}} \ldots \partial_{a_1} g_i \\
& = & - \lambda_1 (- L _{a_0 \ldots a_{i-1}}) \, \partial_{a_{i-1}} \ldots \partial_{a_1} g_i .
\end{eqnarray*}
Hence by Lemma \ref{lemme:g1} we deduce that, up to some change of sign, we have $g_1 ^{a_0 \ldots a_{i-1}} = \partial_{a_{i-1}} \ldots \partial_{a_1} g_i$ and thus $\partial_{a_{i}} \ldots \partial_{a_1} g_i =1$. The proof of Theorem \ref{theo:lambda-n} is now achieved.
\end{proof}

Let us comment Theorem~\ref{theo:lambda-n}. To the authors knowledge, the two-sided estimates \eqref{eq:lambda_n_init} together with the optimality result consisting of decomposing the nth positive eigenvalue as the sum of $n$ spectral gaps of different operators, are new. Behind the optimality result is hidden an interesting property of the eigenfunctions $g_i$ associated to the operator $-L$, known in the famous Sturm-Liouville theory as a Chebyshev system in the sense of Karlin and Studden \cite{KS}. In other words, the proof above gives some interesting information on the oscillations of the $g_i$ in the sense that, beyond the strict monotonicity of $g_1$ (an instance of Lemma \ref{lemme:g1}), the following functions are also strictly monotone:
$$
\frac{g_2 '}{g_1' }, \quad \left(\frac{g_3 '}{g_1'} \right)' / \left( \frac{g_2 '}{g_1 '} \right)'  , \quad \left( \frac{ \left( \frac{g_4 '}{g_1'} \right)'  }{ \left( \frac{g_2 '}{g_1'} \right)'}  \right) ' / \left( \frac{ \left( \frac{g_3 '}{g_1'} \right)' }{ \left( \frac{g_2 ' }{g_1' }\right)' }  \right) ' , \quad \mbox{etc.}
$$
Certainly, such a property is trivial in the standard Gaussian case introduced previously since it is obviously satisfied by the Hermite polynomials. However the proof of Theorem \ref{theo:lambda-n} reveals that it concerns actually a much more general framework through the one-dimensional diffusion operators we investigate in the present paper. \smallskip

As a final remark of Theorem~\ref{theo:lambda-n}, we mention that the constraint on the last eigenvalue $\lambda_n (-L)$ might be relaxed in the optimality result. Indeed, let us only assume that $\lambda_i (-L) \in \sigma_{disc} (-L)$, for $i \in \{ 1, \ldots, n-1 \}$, so that $\lambda_n (-L)$ is allowed to be the bottom of $\sigma_{ess} (-L)$. Then we arrive \textit{a priori} at the following result (once again we use the notation involving the artificial weight $a_0 = 1$):
\begin{equation}
\label{eq:ecart}
\sup_{a_n \in \FF} \, \inf_{x\in \R} \, M_{a_0 \ldots a_{n-1}} ^{a_n} (x)\leq \lambda_n (-L) - \lambda_{n-1} (-L) \leq \inf_{a_n \in \FF} \, \sup_{x\in \R} \, M_{a_0 \ldots a_{n-1}} ^{a_n} (x),
\end{equation}
where the $a_i$ are given recursively by the well-defined system
\begin{equation}
\label{eq:ai_system}
\partial_{a_i} \ldots \partial_{a_1} g_i = 1, \quad i\in \{ 1, \ldots, n-1 \}.
\end{equation}
Such a result, written in this form, will be exploited in Section~\ref{sect:ecart} to address the problem of estimating the gap between consecutive eigenvalues. In this situation, we claim that the equality holds (at least) in the left-hand-side of \eqref{eq:ecart}, which might be rewritten, since the identity \eqref{eq:gap_consecutive} still holds under the present assumptions, as
\begin{equation}
\label{eq:lambda_an}
\lambda_1 (-L_{a_0 \ldots a_{n-1}}) = \sup_{a_n \in \FF} \, \inf_{x\in \R} \, M_{a_0 \ldots a_{n-1}} ^{a_n} (x).
\end{equation}
To establish the identity \eqref{eq:lambda_an}, it is sufficient to show the inequality $\leq$, the other being a direct consequence of Chen-Wang's formula \eqref{eq:lambda1} applied with $a = a_0 \ldots a_{n-1}$. According to the choice of the $a_i$, $i \in \{ 1,\ldots, n-1 \}$, the measure $\mu_{a_0 \ldots a_{n-1}}$ is finite and by \eqref{eq:gap_consecutive} we know that $\lambda_1 (-L_{a_0 \ldots a_{n-1}}) >0$ since $\lambda_{n-1} (-L)$ is assumed to belong to the set $\sigma_{disc} (-L)$ and thus is isolated. Hence applying Theorem \ref{theo:johnsen} gives with $(a,b)= (a_0 \ldots a_{n-1},1)$,
\[
\lambda_1 (-L_{a_0 \ldots a_{n-1}}) = \lambda_0 ( -L_{a_0 \ldots a_{n-1}} + V_{a_0 \ldots a_{n-1}} ''),
\]
since $M_{a_0 \ldots a_{n-1}} ^{1} = V_{a_0 \ldots a_{n-1}} ''$. Now choosing $ \lambda < \lambda_0 ( -L_{a_0 \ldots a_{n-1}} + V_{a_0 \ldots a_{n-1}} '')$ so that the operator $-L_{a_0 \ldots a_{n-1}} + V_{a_0 \ldots a_{n-1}} '' -\lambda I$ is invertible from $\D (-L_{a_0 \ldots a_{n-1}} + V_{a_0 \ldots a_{n-1}} '')$ to $L^2(\mu_{a_0 \ldots a_{n-1}})$ and taking some non identically null $g\in \C^\infty _0 (\R)$ with $ g\geq 0$, there exists a unique smooth function $f \in \D (-L_{a_0 \ldots a_{n-1}} + V_{a_0 \ldots a_{n-1}} '' )$ which is in $\C^\infty (\R)$, such that
$$
( -L_{a_0 \ldots a_{n-1}} + V_{a_0 \ldots a_{n-1}} '' -\lambda I ) \, f = g.
$$
Using its Riesz-type representation somewhat similar to \eqref{eq:Riesz}, the operator $(-L_{a_0 \ldots a_{n-1}} + V_{a_0 \ldots a_{n-1}} '' -\lambda I)^{-1}$ is positivity improving, i.e., $f>0$. Now the latter expression rewrites as $(-L_{a_0 \ldots a_{n-1}} + V_{a_0 \ldots a_{n-1}} '') \, f = \lambda f + g$ and since $g \geq 0$ we get
\[
\inf_{x\in \R} \, V_{a_0 \ldots a_{n-1}} '' (x)- \frac{L_{a_0 \ldots a_{n-1}} f(x)}{f(x)} \geq \lambda,
\]
i.e., $\inf_{x\in \R} \, M_{a_0 \ldots a_{n-1}} ^{a_n} (x) \geq \lambda$ when letting $a_n = 1/f\in \FF$. Finally taking the supremum over $a_n \in \FF$ and letting $\lambda $ tends to $\lambda_0 ( -L_{a_0 \ldots a_{n-1}} + V_{a_0 \ldots a_{n-1}} '') = \lambda_1(-L_{a_0 \ldots a_{n-1}})$ yields the desired inequality. Note that it is an interesting question to investigate whether the supremum in the above theorem is attained for some function $a_n \in \FF$.

\section{Examples}
\label{sect:ex-ap}
In this section we investigate some concrete situations where Theorem~\ref{theo:lambda-n} might be illustrated.

\subsection{The uniformly convex case}
Let us briefly consider the case of a uniformly convex potential, $\inf_{x\in \R} \, V''(x) >0$, for which we know that the spectrum of the operator $-L$ is discrete (so that the spectral gap exists). Choosing in Theorem \ref{theo:lambda-n} all the $a_i =1$ entails that $M_{a_0 \ldots a_{i-1}} ^{a_i} = V''$ for all $i \geq 1$, so that we get the following result.
\begin{theo}
\label{theo:milman}
Assuming that $\inf_{x\in \R} \, V''(x) >0$, we have for every $n\in \N^*$,
$$
n \, \inf_{x\in \R} \, V''(x) \leq \lambda_n (-L) \leq n \, \sup_{x\in \R} \, V''(x) ,
$$
with the convention that the right-hand-side is infinite if $V''$ is not bounded from above.
\end{theo}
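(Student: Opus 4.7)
The plan is to apply the two-sided estimates \eqref{eq:lambda_n_init} of Theorem~\ref{theo:lambda-n} with the trivial choice $a_1 = a_2 = \cdots = a_n = 1$, and verify that this choice is admissible under the uniform convexity assumption.

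First I would check the two admissibility conditions. Since all the $a_i$ equal $1$, the products $a_0 a_1 \cdots a_{i-1}$ are all equal to $1$, hence $L_{a_0 \ldots a_{i-1}} = L$ and $\mu_{a_0 \ldots a_{i-1}} = \mu$, which is finite by the standing hypothesis on $\mu$. The uniform convexity $\inf_{x\in\R} V''(x) > 0$ yields, via the Bakry--\'Emery criterion (or directly via Chen--Wang's formula \eqref{eq:lambda1} applied with $a=b=1$, since $M_1^1 = V''$), that $\lambda_1(-L) > 0$. In particular the spectrum of $-L$ is discrete, so all admissibility requirements in Theorem~\ref{theo:lambda-n} are satisfied.

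Next I would compute $M_{a_0 \ldots a_{i-1}}^{a_i}$ under this choice. From the defining formula \eqref{eq:Ma_derivee}, taking $a = 1$ and $b = 1$ gives
\[
M_{1}^{1} \;=\; V_{1}'' - 1 \cdot L_{1}(1) \;=\; V'' - 0 \;=\; V'',
\]
so that all $n$ potentials appearing in the summation reduce to the single function $V''$. Plugging this into the lower and upper bounds of \eqref{eq:lambda_n_init} produces immediately
\[
n \inf_{x\in\R} V''(x) \;=\; \sum_{i=1}^{n} \inf_{x\in\R} M_{a_0 \ldots a_{i-1}}^{a_i}(x) \;\leq\; \lambda_n(-L) \;\leq\; \sum_{i=1}^{n} \sup_{x\in\R} M_{a_0 \ldots a_{i-1}}^{a_i}(x) \;=\; n \sup_{x\in\R} V''(x),
\]
which is the desired inequality. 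If $V''$ is unbounded from above, the upper bound reads $+\infty$ according to the convention already stated in Theorem~\ref{theo:lambda-n}.

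There is no real obstacle here: the whole point is that the uniformly convex setting is precisely the case where the trivial (unweighted) choice in Theorem~\ref{theo:lambda-n} already produces a meaningful estimate. The only thing to be careful about is ensuring the hypotheses of Theorem~\ref{theo:lambda-n} are met at every level of the recursion, and this is trivial because all iterates collapse back to the original operator $L$ and the original measure $\mu$.
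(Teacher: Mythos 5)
Your proposal is correct and follows essentially the same route as the paper: choose all the weights $a_i = 1$ in Theorem~\ref{theo:lambda-n}, so that every potential $M_{a_0 \ldots a_{i-1}}^{a_i}$ reduces to $V''$ and the two-sided estimate \eqref{eq:lambda_n_init} gives exactly the claimed bounds. Your verification of the admissibility conditions (finiteness of $\mu$ and $\lambda_1(-L)>0$ via uniform convexity) is just a slightly more explicit spelling-out of what the paper leaves implicit.
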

In other words, if we denote $L^\rho$ the operator with potential $V = \rho \vert \cdot \vert ^2 /2$, where $\rho>0$, we have in this Gaussian setting $\lambda_n (-L^\rho ) = n \rho $ and the previous result means that the eigenvalues might be compared as follows:
\begin{equation}
\label{eq:compar_gauss}
\lambda_n (-L^{\rho_1} ) \leq \lambda_n (-L) \leq \lambda_n (-L^{\rho_2}) ,
\end{equation}
where $\rho_1 = \inf_{x\in \R} \, V''(x)$ et $\rho_2 = \sup_{x\in \R} \, V''(x)$ are assumed to be positive and finite. Such an observation has been put forward in a recent paper of Milman \cite{milman} in which he proceeds to a systematic comparison of eigenvalues for the Riemannian version of the operator $L$ (called a weighted Laplacian in his article) under some uniform convexity assumptions on the potential. \smallskip

Actually, the uniformly convex case is special since by Johnsen's theorem \ref{theo:Johnsen_initial}, we get for all $n\in \N^*$,
$$
\lambda_n (-L) = \lambda_{n-1} (-L + V''),
$$
from which the following result about the gap between consecutive eigenvalues is straightforward (we will see in Section \ref{sect:ecart} that estimating the gap between consecutive eigenvalues is much more complicated when dealing with non uniformly convex potentials).
\begin{theo}
\label{theo:ecart_UC}
Assuming that $\inf_{x\in \R} \, V''(x) >0$, then we have for every $n\in \N^*$,
$$
\inf_{x\in \R} \, V''(x) \leq \lambda_n (-L) - \lambda_{n-1} (-L) \leq \sup_{x\in \R} \, V''(x) ,
$$
with the convention that the right-hand-side is infinite if $V''$ is not bounded from above.
\end{theo}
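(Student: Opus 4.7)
The plan is to read off the bound directly from Johnsen's theorem together with a simple comparison of Rayleigh quotients. Under the assumption $\inf_{x\in\R} V''(x)>0$, the potential $V$ is in particular uniformly convex, so the spectral gap of $-L$ exists and the hypothesis of Theorem~\ref{theo:Johnsen_initial} is satisfied. That theorem then yields, for every $n \in \N^*$,
\[
\lambda_n(-L) \;=\; \lambda_{n-1}(-L + V'').
\]
So it suffices to compare $\lambda_{n-1}(-L+V'')$ with $\lambda_{n-1}(-L)$, and the difference of the two should be controlled by the range of $V''$.

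For this, I would use the max-min principle (Theorem~\ref{theo:CF}) for both operators with the common core $\C^\infty_0(\R)$. For any $f\in \C^\infty_0(\R)$, the Rayleigh quotient of $-L+V''$ splits as
\[
\frac{\int_\R f\,(-L+V'')f\,d\mu}{\int_\R f^2\,d\mu}
\;=\;
\frac{\int_\R f\,(-L)f\,d\mu}{\int_\R f^2\,d\mu} + \frac{\int_\R V''\,f^2\,d\mu}{\int_\R f^2\,d\mu},
\]
and the second term lies in $[\inf_\R V'',\sup_\R V'']$ (with the upper endpoint possibly $+\infty$). Taking the infimum over $f$ in the appropriate $\perp_\mu$-orthogonal subspace and then the supremum over $(n{-}1)$-tuples $g_0,\ldots,g_{n-2}\in L^2(\mu)$ preserves these additive inequalities, yielding
\[
\lambda_{n-1}(-L) + \inf_\R V'' \;\leq\; \lambda_{n-1}(-L+V'') \;\leq\; \lambda_{n-1}(-L) + \sup_\R V''.
\]
Combining with Johnsen's identity then gives exactly
\[
\inf_\R V'' \;\leq\; \lambda_n(-L) - \lambda_{n-1}(-L) \;\leq\; \sup_\R V''.
\]
The case $n=1$ fits the same scheme, since $\lambda_0(-L)=0$ and $\lambda_1(-L)=\lambda_0(-L+V'')$ is bounded above by taking $f\equiv 1$ (which lies in $\D(-L+V'')$ because $\mu$ is finite and $V''$ is bounded above, else the bound is vacuous), and below by the pointwise estimate $V''\geq \inf_\R V''$ in the Rayleigh quotient.

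There is essentially no hard step here, since the heavy lifting — unitary equivalence between $(-L)|_{\D_0(-L)}$ and $-L+V''$ — has already been carried out in Theorem~\ref{theo:Johnsen_initial}. The only points to be a little careful about are the convention when $\sup_\R V'' = +\infty$ (in which case the upper bound is trivially infinite) and the verification that the Rayleigh quotients may legitimately be compared on a common dense subspace, which is ensured by the essential self-adjointness of both operators on $\C^\infty_0(\R)$ recalled in Section~\ref{sect:prelim}.
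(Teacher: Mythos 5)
Your proposal is correct and follows essentially the same route as the paper: the paper deduces the theorem directly from Johnsen's identity $\lambda_n(-L)=\lambda_{n-1}(-L+V'')$ and calls the rest ``straightforward'', which is precisely the additive Rayleigh-quotient comparison in the max-min principle that you spell out. No gaps; your extra care with the case $n=1$ and the convention $\sup_\R V''=+\infty$ is consistent with the paper's setup.
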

Hence in the uniformly convex case Theorem \ref{theo:milman} is nothing but a direct consequence of Theorem \ref{theo:ecart_UC}, meaning that the two-sided estimates of Theorem \ref{theo:lambda-n} are somewhat useless in this context (however the optimality result is still interesting). The next part proposes to address more interesting examples beyond the uniformly convex case, for which Theorem \ref{theo:lambda-n} is really required to estimate $\lambda_n (-L)$.

\subsection{Subbotin distributions}
Now let us investigate how Theorem \ref{theo:lambda-n} might be applied to a non uniformly convex potential, through the example of $V = \vert \cdot \vert ^\alpha / \alpha$ where $\alpha \geq 1$. This is the so-called Subbotin (or exponential power) distribution (for $\alpha = 2$ we are reduced to the standard Gaussian case). The potential $V$ is strictly convex on $\R$ but its second derivative vanishes at infinity for $\alpha \in (1,2)$ and at zero for $\alpha >2$ (although $V$ is not smooth at the origin for $\alpha \in (1,2)$, this point might be ignored at the price of an unessential regularizing procedure). According to \eqref{eq:ess_empty} we have $\sigma_{ess} (-L) = \emptyset$ for $\alpha >1$ and thus the spectrum is discrete. The case of the Laplace distribution, i.e. $\alpha=1$, is somewhat particular since the essential spectrum is not empty (actually we have $\sigma_{disc} (-L) = \{ 0\}$ and $\sigma_{ess} (-L) = [1/4, +\infty)$) and therefore is not concerned with the forthcoming analysis. Let us concentrate our attention on the range $\alpha \in (1,2)$ only, the framework $\alpha >2$ being tackled differently in the next part. Our aim is to apply Theorem \ref{theo:lambda-n} to this example, or more precisely to obtain at least a relevant lower bound on $\lambda_n (-L)$ for $n\geq 1$.  We set $a_1 = e^{- \varepsilon_1 V}$ and for all $i \geq 2$,
$$
a_i = \exp \left( - \varepsilon_i \, \prod_{k=1} ^{i-1} (1-2\varepsilon_k) \, V \right) ,
$$
where the $\varepsilon_i \in (0,1/2)$ are some constants to be chosen conveniently later. Then for all $i \geq 2$  we have $a_i = e^{-\varepsilon_i V_{a_1 \ldots a_{i-1}}}$ where
$$
V_{a_1 \ldots a_{i-1}} = \prod_{k=1} ^{i-1} (1-2\varepsilon_k) \, V .
$$
Moreover we have for all $x\in \R$,
\begin{eqnarray*}
M_{a_1 \ldots a_{i-1}} ^{a_i} (x) & = & (1-\varepsilon_i) \left( V_{a_1 \ldots a_{i-1}} '' (x) + \varepsilon_i \, (V_{a_1 \ldots a_{i-1}}' (x))^2 \right) \\
& = & (1-\varepsilon_i) \left( \prod_{k=1} ^{i-1} (1-2\varepsilon_k) \, V '' (x)+ \varepsilon_i \, \prod_{k=1} ^{i-1} (1-2\varepsilon_k)^2 \, (V' (x))^2  \right) \\
& = & C_i \, \vert x\vert ^{\alpha-2} + D_i \, \vert x\vert ^{2(\alpha-1)} ,
\end{eqnarray*}
with the constants $C_i$ and $D_i$ defined by
$$
C_i = (\alpha-1) (1-\varepsilon_i) \prod_{k=1} ^{i-1} (1-2\varepsilon_k) \quad \mbox{and} \quad D_i = \varepsilon_i (1-\varepsilon_i) \prod_{k=1} ^{i-1} (1-2\varepsilon_k)^2 .
$$
The minimum of $M_{a_1 \ldots a_{i-1}} ^{a_i}$ is attained at point
$$
x= \left( \frac{(2-\alpha)C_i}{2(\alpha-1)D_i} \right)^{1/\alpha} ,
$$
so that
\begin{eqnarray*}
\inf_{x\in \R} \, M_{a_1 \ldots a_{i-1}} ^{a_i} (x) & = & \left( \frac{2-\alpha}{2(\alpha-1)} \right)^{1-2/\alpha} \, \frac{\alpha}{2(\alpha-1)} \, C_i \, \left( \frac{C_i}{D_i}\right) ^{1 - 2/\alpha} \\
& = & \left( \frac{2-\alpha}{2} \right)^{1-2/\alpha} \, \frac{\alpha}{2} \, (1-\varepsilon_i) \, \varepsilon_i ^{(2-\alpha)/\alpha } \, \prod_{k=1} ^{i-1} (1-2\varepsilon_k)^{2/\alpha} .
\end{eqnarray*}
Hence by Theorem \ref{theo:lambda-n} we have the following estimate: for all $n\geq 1$,
$$
\lambda_n (-L) \geq \left( \frac{2-\alpha}{2} \right)^{1-2/\alpha} \, \frac{\alpha}{2} \, \sum_{i=1} ^n  (1-\varepsilon_i) \, \varepsilon_i ^{(2-\alpha)/\alpha} \, \prod_{k=1} ^{i-1} (1-2\varepsilon_k)^{2/\alpha}.
$$
Now we need to find some $\varepsilon_i$ such that the series diverges (recall that $\lambda_n (-L) $ tends to infinity as $n$ tends to infinity) and, to that aim, assuming for instance $\sum_k \varepsilon_k < +\infty$ enforces the infinite product to be positive and thus the constraint $\sum_i \varepsilon_i ^{(2-\alpha)/\alpha } = + \infty$. Finally the choice $\varepsilon_k = (k+1)^{-\beta} /2 \in (0,1/2)$ for some $\beta$ in the range $(1,\alpha /(2-\alpha))$ allows us to get the estimate
$$
\lambda_n (-L) \geq C_{\alpha,\beta} \, n^{1-\beta (2/\alpha -1)},
$$
with $C_{\alpha,\beta} >0$ some explicit constant only depending on $\alpha$ and $\beta$. In particular choosing $\beta$ in a neighborhood of 1 (choosing directly $\beta =1$ in the very definition of $\varepsilon_i$ above is allowed but not relevant since it leads to a lower bound on $\lambda_n (-L)$ converging to 0 as $n$ is large) entails a lower bound that is close to optimality for large $n$ given by the rate $n^{2 - 2/\alpha }$. Indeed such a result, which is available for all $\alpha >1$, might be deduced from Weyl's asymptotic law adapted to the present setting, cf. for instance Milman's formula (2.9) in \cite{milman}: as $\lambda \to +\infty$, the eigenvalue counting function behaves as
\begin{equation}
\label{eq:counting}
\# \{ k \geq 1 : \lambda_k (-L) \leq \lambda \} = C_\alpha \, \lambda ^{\frac{\alpha}{2(\alpha-1)}} \, (1+o(1) ) ,
\end{equation}
with $C_\alpha >0$ some explicit constant. We suspect however that the correct order of magnitude might be recovered by choosing more cleverly the $\varepsilon_i$.

\subsection{Lower bounding the eigenvalues through only one intertwining}
If we analyze in detail the proof of the lower bound of Theorem~\ref{theo:lambda-n}, we observe that at each step of the recursive argument we invoke Theorem~\ref{theo:johnsen} which is based on the intertwining of Lemma~\ref{lemme:entrelacement}. In other words, Theorem~\ref{theo:lambda-n} is obtained by proceeding to successive intertwinings of the type \eqref{eq:intert}. In this short part, we propose an alternative method, still based on this intertwining formula, but which uses it only once. More precisely, the idea is to use some convenient weight such that the operator $-L_{ab}$ in the right-hand-side of \eqref{eq:intert} has its spectrum which might be easier to analyze. Certainly, such an approach has no reason to entail optimality in general since $-L_{ab}$ is prescribed a priori but it has the advantage to simplify considerably the intertwining approach, in particular when $-L_{ab}$ is the Gaussian operator $-L^\rho$ for some convenient $\rho>0$, for which we have seen that $\lambda_n (-L^\rho)$ behaves linearly in $n$. Hence let us start from the original operator $-L$ and consider the weight $a_1= e^{-(V-Z)/2}$, where $Z$ is some smooth potential to be determined later. Then the intertwining of Lemma~\ref{lemme:entrelacement} applied with the couple $(a,b) = (1, a_1)$ gives
$$
L_{a_1} f = f'' - Z' \, f' , \quad \mbox{ and } \quad M_1 ^{a_1} = \frac{V''}{2} + \frac{{V'}^2}{4} + \frac{Z''}{2} - \frac{{Z'}^2}{4}.
$$
Hence if we know a priori that $\lambda_1 (-L) >0$ then Theorem~\ref{theo:johnsen} applies. Assuming that there exists $\kappa \in \R$ such that $\inf_{x\in \R} \,  M_1 ^{a_1} (x) \geq \kappa$, then we have for all $n\in \N^*$,
$$
\lambda_n (-L) \geq \lambda_{n-1} (- L_{a_1}) + \kappa .
$$
In particular if we choose a Gaussian potential $Z = \rho \vert \cdot \vert ^2 /2$ with $\rho >0$, then the following result holds.
\begin{theo}
\label{theo:one-intert}
Assume that $\lambda_1 (-L) >0$ and the existence of some constant $\kappa \in \R$ such that
\begin{equation}
\label{eq:crit_gaussien}
M_1 ^{a_1} = \frac{V''}{2} + \frac{{V'}^2}{4} + \frac{\rho}{2} - \frac{\rho ^2 x^2 }{4} \geq \kappa .
\end{equation}
Then we have for all $n\in \N^*$,
$$
\lambda_n (-L) \geq \lambda_{n-1} (-L ^\rho ) + \kappa = \rho (n-1) + \kappa .
$$
\end{theo}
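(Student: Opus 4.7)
The plan is to recognize that the weight $a_1 = e^{-(V-Z)/2}$ with $Z = \rho |\cdot|^2/2$ has been chosen precisely so that the conjugated diffusion operator $L_{a_1}$ coincides with the Ornstein--Uhlenbeck operator $L^\rho$. Indeed, a direct computation gives $V_{a_1} = V + \log(a_1^2) = V - (V-Z) = Z = \rho|\cdot|^2/2$, and therefore $L_{a_1} f = f'' - \rho x\, f' = L^\rho f$. The formula for $M_1^{a_1}$ displayed in the statement is then nothing but the general identity \eqref{eq:Ma_derivee} specialized to $(a,b) = (1, a_1)$.

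Next, since by assumption $\lambda_1(-L) > 0$ and $\mu$ is finite by the standing hypothesis, the revisited Johnsen theorem (Theorem \ref{theo:johnsen}) applies with $(a,b) = (1,a_1)$, yielding for every $n \in \N^*$
\[
\lambda_n(-L) \;=\; \lambda_{n-1}\bigl(-L_{a_1} + M_1^{a_1}\bigr) \;=\; \lambda_{n-1}\bigl(-L^\rho + M_1^{a_1}\bigr).
\]
Now the pointwise lower bound $M_1^{a_1}(x) \geq \kappa$ together with the max-min principle of Theorem \ref{theo:CF} immediately gives
\[
\lambda_{n-1}\bigl(-L^\rho + M_1^{a_1}\bigr) \;\geq\; \lambda_{n-1}(-L^\rho) + \kappa,
\]
since adding a non-negative constant $M_1^{a_1} - \kappa$ to a self-adjoint operator can only increase each of the variational eigenvalues.

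Finally, it remains to recall that in the standard Gaussian setting with potential $Z = \rho |\cdot|^2/2$, one has $\lambda_{n-1}(-L^\rho) = \rho(n-1)$; this is the well-known fact that the Hermite polynomials diagonalize $-L^\rho$ with eigenvalues $0, \rho, 2\rho, \ldots$, mentioned in the preliminaries. Combining these three steps yields the announced inequality. The only non-routine point is ensuring that Theorem \ref{theo:johnsen} is legitimately applicable here, which reduces to checking that $\mu_{a_0} = \mu$ is finite (assumed throughout) and that $\lambda_1(-L) > 0$ (assumed in the statement); no further spectral information on $-L$ is needed, as the Gaussian operator carries all the remaining information about the lower spectrum.
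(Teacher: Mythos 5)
Your proof is correct and follows essentially the same route as the paper: identify $V_{a_1}=Z=\rho\vert\cdot\vert^2/2$ so that $L_{a_1}=L^\rho$, apply the revisited Johnsen theorem with $(a,b)=(1,a_1)$ (legitimate since $\mu$ is finite and $\lambda_1(-L)>0$), lower-bound the Schr\"odinger potential by $\kappa$ via the max-min principle, and use $\lambda_{n-1}(-L^\rho)=\rho(n-1)$. The only cosmetic slip is calling $M_1^{a_1}-\kappa$ a ``non-negative constant'' when it is a non-negative function, which does not affect the argument.
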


Certainly, the lower bound above is meaningful only if the right-hand-side is positive. However an important point is to allow the constant $\kappa$ to be  non-positive, as we will see below in the examples. Note that if $\kappa >0$ then the assumption $\lambda_1 (-L) >0$ is redundant, according to Chen-Wang's formula \eqref{eq:lambda1} applied with $(a,b) = (1,a_1)$ (we suspect however that this redundance holds in full generality). \smallskip

In the case of a uniformly convex potential $V$, this result allows to recover the lower bound in \eqref{eq:compar_gauss} when comparing with the Gaussian setting. Assume that $V'' \geq \rho$ with $\rho >0$. At the price of a translation in space causing no trouble, let us assume without loss of generality that the unique minimum of $V$ is attained at the origin, so that we have
$$
\vert V'(x) \vert \geq \rho \, \vert x \vert , \quad x\in \R .
$$
Then the criterion \eqref{eq:crit_gaussien} indicates that $M_1 ^{a_1} \geq \rho$ and therefore we obtain for all $n\in \N^*$,
$$
\lambda_n (-L) \geq \lambda_{n-1} (- L^\rho) + \rho = \lambda_{n} (- L^\rho).
$$

Actually, the criterion \eqref{eq:crit_gaussien} is sufficiently robust to involve many various situations of interest that might be difficult to cover otherwise. Let us start by considering once again the Subbotin distribution through the strictly convex potential $V = \vert \cdot \vert ^\alpha /\alpha$ but this time with $\alpha >2$. As mentioned earlier, we know that $\lambda_1 (-L) >0$. We assume that $Z = \rho \vert \cdot \vert ^2 /2$ with $\rho >0$ to be determined later. Since at infinity we have $(V')^2 \gg V''$, we neglect the contribution of $V''$ in the inequality \eqref{eq:crit_gaussien}, so that we are looking for the minimum of the function
$$
x\mapsto \frac{\vert x\vert ^{2(\alpha-1)}}{4} - \frac{\rho^2 x^2}{4} + \frac{\rho}{2}, \quad x\in \R ,
$$
which is negative and equals $\kappa := -C_\alpha \, \rho^{\frac{2(\alpha-1)}{\alpha-2}} + \rho/2$, where $C_\alpha >0$ is some explicit constant only depending on $\alpha$. Therefore we obtain by Theorem~\ref{theo:one-intert} the following estimate: for all $n\in \N^*$,
$$
\lambda_n (-L) \geq \rho (n-1)-C_\alpha \, \rho^{\frac{2(\alpha-1)}{\alpha-2}} + \frac{\rho}{2}.
$$
Finally optimizing in $\rho >0$ yields for all $n \in \N^*$ the lower bound
$$
\lambda_n (-L) \geq \tilde{C}_\alpha \, n ^{\frac{2(\alpha-1)}{\alpha}},
$$
which is sharp as $n$ is large, according to Weyl's asymptotic law \eqref{eq:counting}. Above $\tilde{C}_\alpha  >0$ is some explicit constant that only depends on $\alpha$. \smallskip

Another interesting situation is the case of a somewhat degenerated potential of the type
$$
V(x) = \frac{x^2}{2} + \alpha \sin (\beta x^2), \quad x\in \R ,
$$
where $\alpha, \beta >0$ are some small parameters to be calibrated so that the criterion \eqref{eq:ess_empty} ensuring a discrete spectrum is satisfied. In other words, $V$ is some Gaussian potential perturbed by an oscillating (and bounded) function whose oscillation intensity depends on $\alpha$ and $\beta$. We have for all $x\in \R$,
$$
V'(x) = x \, \left( 1+ 2 \alpha \beta \cos (\beta x^2)\right) \quad \mbox{and} \quad V''(x) = 1 + 2 \alpha \beta \cos (\beta x^2) - 4 \alpha \beta ^2 x^2 \sin (\beta x^2),
$$
and therefore the degeneracy resides in the fact that $\liminf _{\vert x\vert \to +\infty } V''(x) = - \infty$. However provided at least $\alpha$ or $\beta$ is sufficiently small, there exists positive constants $\rho$ and $\kappa$ both depending on $\alpha$ or $\beta$ such that \eqref{eq:crit_gaussien} is satisfied. Hence Theorem~\ref{theo:one-intert} can be applied to this example.

\section{Estimating the gap between the two first positive eigenvalues beyond the uniformly convex case}
\label{sect:ecart}
As mentioned previously in Section \ref{sect:ex-ap}, Johnsen's theorem \ref{theo:Johnsen_initial} allows us to obtain in the uniformly convex case some estimates on the gap between consecutive eigenvalues. However the situation is much more complicated when dealing with non uniformly convex potentials. Letting $n\in \N^*$ be fixed, if we assume that $\lambda_i (-L) \in \sigma_{disc}(-L)$ for every $i \in \{ 1, \ldots, n-1 \}$, then we saw in Section \ref{sect:main} that \eqref{eq:lambda_an} (combined with \eqref{eq:gap_consecutive}) leads to the following identity:  with $a_0 = 1$,
$$
\lambda_n (-L) - \lambda_{n-1} (-L)  = \sup_{a_n \in \FF} \, \inf_{x\in \R} \, M_{a_0 \ldots a_{n-1}} ^{a_n} (x),
$$
the $a_i$ being given recursively by
$$
\partial_{a_i} \ldots \partial_{a_1} g_i = 1, \quad i\in \{ 1, \ldots, n-1 \},
$$
where each eigenfunction $g_i$ is associated to the eigenvalue $\lambda_i (-L)$, $i\in \{ 1, \ldots, n-1 \}$. Hence the gap $\lambda_n (-L) - \lambda_{n-1} (-L)$ in the spectrum of the operator $-L$ can be estimated as soon as we are able to control the potential $M_{a_0 \ldots a_{n-1}} ^{a_n}$ for some well-chosen weight $a_n \in \FF$. As expected, this problem reveals to be more difficult than only estimating the eigenvalues $\lambda_{n} (-L)$ themselves as proposed in Section~\ref{sect:main}. Indeed in this case the weights $a_i$ are unknown since the eigenfunctions $g_i$ of the operator $-L$ are unknown in general. In this final part of the paper, we introduce a strategy to cover the case $n=2$, i.e. to estimate from below the gap between the two first positive eigenvalues. \smallskip

Letting $n=2$, we assume that $\lambda_1 (-L) \in \sigma_{disc} (-L)$ and thus we focus our attention on the control of a potential of the type $M_{a_1} ^{a_2}$ for some weight $a_2\in \FF$ (the artificial weight $a_0$ has been removed to lighten the notation) and with $a_1$ given by
$$
\partial_{a_1} g_1 = 1, \quad i.e., \quad a_1 = \frac{1}{g_1 '} .
$$
Hence the question is to find such a relevant function $a_2 \in \FF$ such that we have $\inf_{x\in \R} \, M_{a_1} ^{a_2} (x) >0$ from which would follow the desired estimate
$$
\lambda_2 (-L) - \lambda_{1} (-L) \geq \inf_{x\in \R} \, M_{a_1} ^{a_2} (x).
$$
For instance the choice $a_2 =1$ entails that $M_{a_1} ^{a_2} = V_{a_1} ''$ and therefore the requirement $\inf_{x\in \R} \, M_{a_1} ^{a_2} (x) >0$ reduces to the Bakry-\'Emery criterion, that is, the uniform convexity of the potential $V_{a_1}$, which has no explicit expression since $g_1$ is unknown in general. To overcome this difficulty, the idea is to use Theorem~\ref{theo:johnsen} with some convenient weight so that the devoted Schr\"odinger operator is nothing but the Laplacian perturbed by a certain potential, and then to use a celebrated result of Brascamp and Lieb about the log-concavity of the associated ground state. More precisely when choosing the couple $(a,b) = (1, e^{-V/2})$, we have by Theorem~\ref{theo:johnsen} for the first positive eigenvalues,
$$
\lambda_1 (-L) = \lambda_0 (- \Delta + M_1 ^b) .
$$
Above $\Delta$ stands for the Laplacian $\Delta f = f''$ and the potential $M_1 ^b$ is
\begin{eqnarray*}
M_1 ^b & = & V'' - e^{-V/2} \, L (e^{V/2}) \\
& = & V'' + e^{V/2} \, \Delta (e^{-V/2}) \\
& = & \frac{V''}{2} + \frac{(V')^2}{4}.
\end{eqnarray*}
Since $\lambda_1 (-L) \in \sigma_{disc} (-L)$ we have $\lambda_0 (- \Delta + M_1 ^b) \in \sigma_{disc} (- \Delta + M_1 ^b)$ and therefore the ground state $\tilde{g}_0 ^b$ exists and is of constant sign and non-vanishing, cf. Lemma~\ref{lemme:g1}. Moreover, as already mentioned in the proof of Lemma~\ref{lemme:g1}, the intertwining of Lemma~\ref{lemme:entrelacement} allows us to write
$$
(- \Delta + M_1 ^b) \, (\partial_b g_1) = - \partial_b L g_1 = \lambda_1 (-L) \, \partial_b g_1 = \lambda_0 (- \Delta + M_1 ^b) \, \partial_b g_1 ,
$$
so that the eigenfunctions $\tilde{g}_0 ^b$ and $g_1$ are connected as follows: we have $\tilde{g}_0 ^b = \partial_b g_1$ (up to some change of sign). Then the famous result of Brascamp and Lieb \cite{brascamp_lieb} adapted to the present framework (in particular they consider $\Delta /2$ instead of $\Delta$, requiring in our framework a slight modification of the constants they obtain) states that the ground state $\tilde{g}_0 ^b$ is uniformly log-concave as soon as $M_1 ^b$ is uniformly convex, that is, if $M_1 ^b$ satisfies $(M_1 ^b)'' \geq 2 \kappa ^2$ for some $\kappa >0$, then the ground state satisfies $(- \log \, \tilde{g}_0 ^b)'' \geq \kappa$. However we also have
$$
- \log \tilde{g}_0 ^b = - \log \partial_b g_1 = - \log \left( e^{-V/2} g_1 '\right) = \frac{V}{2} - \log \left( g_1 '\right) = \frac{V_{a_1}}{2},
$$
thus leading to $V_{a_1} '' \geq 2 \kappa$, i.e., the measure $\mu_{a_1}$ is uniformly log-concave. Finally combining all the preceding computations, we get the following result.
\begin{theo}
\label{theo:ecart}
Assume that $\lambda_1 (-L) \in \sigma _{disc} (-L)$. If the potential $V'' / 2 + (V')^2 / 4$ has it second derivative bounded from below by $2 \kappa ^2$ for some $\kappa >0$, then the following estimate between the two first positive eigenvalues holds:
$$
\lambda_2 (-L) - \lambda_1 (-L) \geq 2\kappa .
$$
\end{theo}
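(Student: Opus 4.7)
The plan is to apply the left-hand inequality in \eqref{eq:ecart} at rank $n=2$ combined with the identification \eqref{eq:lambda_an}. Since $\lambda_1(-L) \in \sigma_{disc}(-L)$, the recursively defined weight $a_1 = 1/g_1'$ is a well-defined element of $\FF$ by Lemma \ref{lemme:g1}, and one has
$$
\lambda_2(-L) - \lambda_1(-L) \;\geq\; \sup_{a_2 \in \FF} \inf_{x \in \R} M_{a_1}^{a_2}(x).
$$
The simplest admissible choice is $a_2 = 1$, which by \eqref{eq:Ma_derivee} gives $M_{a_1}^{a_2} = V_{a_1}''$. Hence it is sufficient to prove that the (a priori unknown) potential $V_{a_1}$ is $2\kappa$-uniformly convex, i.e.\ that $V_{a_1}'' \geq 2\kappa$ on $\R$.

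The second step is to reinterpret $V_{a_1}$ through a Schr\"odinger operator whose potential is explicit in terms of $V$. I apply Theorem~\ref{theo:johnsen} with $(a,b) = (1, e^{-V/2})$; this is legitimate since $\mu$ is finite and $\lambda_1(-L) >0$. A direct computation using \eqref{eq:Ma_derivee} gives
$$
M_1^b \;=\; \frac{V''}{2} + \frac{(V')^2}{4},
$$
and Theorem~\ref{theo:johnsen} yields $\lambda_1(-L) = \lambda_0(-\Delta + M_1^b)$, this eigenvalue lying in $\sigma_{disc}(-\Delta + M_1^b)$ by the equality of spectra. Hence the ground state $\tilde g_0^b$ exists, is of constant sign and non-vanishing by Lemma~\ref{lemme:g1}, and the intertwining \eqref{eq:intert} identifies it (up to sign) as
$$
\tilde g_0^b \;=\; \partial_b g_1 \;=\; e^{-V/2}\,g_1'.
$$

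The key input is then the Brascamp--Lieb inequality in its one-dimensional form: the ground state of $-\Delta + M_1^b$ is $\kappa$-uniformly log-concave provided $M_1^b$ is $2\kappa^2$-uniformly convex; that is, under our hypothesis $(M_1^b)'' \geq 2\kappa^2$ one gets $(-\log \tilde g_0^b)'' \geq \kappa$. Now the miracle of the construction is that the quantity on the left is exactly $V_{a_1}/2$: indeed, recalling that $V_{a_1} = V + 2\log a_1 = V - 2\log g_1'$, one has
$$
-\log \tilde g_0^b \;=\; \frac{V}{2} - \log g_1' \;=\; \frac{V_{a_1}}{2}.
$$
Therefore $V_{a_1}'' \geq 2\kappa$, which combined with the first step concludes the argument.

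The main obstacle is really conceptual rather than computational: one has to notice that the potential $V_{a_1}$ produced by the recursive scheme of Theorem~\ref{theo:lambda-n}, although defined through the unknown eigenfunction $g_1$, coincides up to a factor of $2$ with $-\log \tilde g_0^b$, the log of a Schr\"odinger ground state whose potential $M_1^b$ is fully explicit in terms of $V$. Once this identification is made, Brascamp--Lieb provides the sharp transfer of convexity needed, and the rest reduces to bookkeeping with the intertwining of Lemma~\ref{lemme:entrelacement} and the revisited Johnsen theorem.
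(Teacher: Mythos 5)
Your proposal is correct and follows essentially the same route as the paper: the gap formula \eqref{eq:ecart}--\eqref{eq:lambda_an} at rank $n=2$ with the choice $a_2=1$, the application of Theorem~\ref{theo:johnsen} with $(a,b)=(1,e^{-V/2})$ to produce the explicit Schr\"odinger potential $M_1^b = V''/2 + (V')^2/4$, the identification $\tilde g_0^b = \partial_b g_1$ so that $-\log \tilde g_0^b = V_{a_1}/2$, and the Brascamp--Lieb transfer of convexity giving $V_{a_1}'' \geq 2\kappa$. No gaps to report.
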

Beyond the standard Gaussian case for which Theorem~\ref{theo:ecart} is sharp, the assumptions allow us to consider non uniformly convex potentials and even non-convex situations. For instance with $V = \vert \cdot \vert ^4 /4$ (recall that in this case we have $\lambda_1 (-L) \in \sigma _{disc} (-L)$, the spectrum being discrete according to \eqref{eq:ess_empty}), we have
$$
\frac{V''}{2} + \frac{(V')^2}{4} = \frac{3 x^2}{2} + \frac{x^6}{4} ,
$$
and we get $\kappa = \sqrt{3/2}$ so that
$$
\lambda_2 (-L) - \lambda_1 (-L) \geq \sqrt{6}.
$$
For the double-well potential $V = \vert \cdot \vert ^4 /4 - \beta \vert \cdot \vert ^2 /2$, exhibiting a concave region in a neighbourhood
of the origin controlled by the size of the parameter $\beta >0$ (once again \eqref{eq:ess_empty} holds for this example and thus $\lambda_1 (-L) \in \sigma _{disc} (-L)$), we have
$$
\frac{V''}{2} + \frac{(V')^2}{4} = \frac{(\beta^2 + 6) x^2}{4} + \frac{(1- 2\beta) x^4}{4} - \frac{\beta}{2},
$$
and provided $\beta \in (0,1/2] $ we get $\kappa = \sqrt{\beta^2 + 6} \, /2$. Finally we obtain
$$
\lambda_2 (-L) - \lambda_1 (-L) \geq \sqrt{6+\beta^2}.
$$

As a concluding remark of this work, we mention that the procedure above might be iterated for all $n\in \N^*$, the main assumption being the uniform convexity of the potential $V_{a_1 \ldots a_{n-1}} '' / 2 + (V' _{a_1 \ldots a_{n-1}})^2 / 4$, which is not satisfactory for the moment since the $a_i$, which are defined through the eigenfunctions $g_i$, are unknown in general. However we believe that the current approach through the intertwinings is promising for the future and will allow to address this problem in full generality, for instance by choosing more conveniently the weight $a_2$.

\vspace{0.5cm}

\end{document}